\newtheorem{theorem}{Theorem}
\newtheorem{lemma}{Lemma}
\newtheorem{proposition}{Proposition}
\newenvironment{proof}{\begin{trivlist}
\item[\hskip\labelsep{\it Proof.}]}{$\hfill\Box$\end{trivlist}}
\newcommand{\norm}[1]{\left\Vert#1\right\Vert}
\newcommand{\bsa}{\boldsymbol{a}}
\newcommand{\bszeta}{\boldsymbol{\zeta}}
\newcommand{\bsalpha}{\boldsymbol{\alpha}}
\newcommand{\bsk}{\boldsymbol{k}}
\newcommand{\bsl}{\boldsymbol{l}}
\newcommand{\bsb}{\boldsymbol{b}}
\newcommand{\bsx}{\boldsymbol{x}}
\newcommand{\bsh}{\boldsymbol{h}}
\newcommand{\bsone}{\boldsymbol{1}}
\newcommand{\bsy}{\boldsymbol{y}}
\newcommand{\cG}{{\cal G}}
\newcommand{\e}{{\varepsilon}}
\newcommand{\uu}{\mathfrak{u}}
\newcommand{\de}{{\rm e}}
\newcommand{\icomp}{\mathtt{i}}
\newcommand{\bszero}{\boldsymbol{0}}
\newcommand{\rd}{\,\mathrm{d}}
\newcommand{\NN}{\mathbb{N}}
\newcommand{\ZZ}{\mathbb{Z}}
\newcommand{\nat}{\NN}
\newcommand{\qed} {\hfill \Box \vspace{0.5cm}}
\renewcommand{\pmod}[1]{\,(\bmod\,#1)}
\newcommand{\il}{\left<}
\newcommand{\ir}{\right>}
\def\qed{\hfill$\Box$}
\newcommand{\abs}[1]{\left\vert#1\right\vert}
\begin{document}

\title{Approximation of analytic
functions  in Korobov spaces}

\author{Josef Dick\thanks{J.~Dick is 
supported by an Australian Research Council Queen Elizabeth 2 Fellowship.}, 
Peter Kritzer\thanks{P.~Kritzer 
acknowledges the support of the Austrian Science Fund (FWF), 
Project P23389-N18.},
\\ Friedrich Pillichshammer, 
Henryk Wo\'zniakowski\thanks{H. Wo\'zniakowski is partially supported
by NSF.}}

\maketitle

\begin{abstract}
We study multivariate $L_2$-approximation for a weighted Korobov space
of analytic periodic functions for which
the Fourier coefficients decay exponentially fast.
The weights are defined, in particular, 
in terms of two sequences $\bsa=\{a_j\}$
and $\bsb=\{b_j\}$ of numbers no less than one.
Let $e^{L_2-\mathrm{app},\Lambda}(n,s)$ 
be the minimal worst-case error of all algorithms
that use $n$ information functionals from 
the class~$\Lambda$ in the $s$-variate case. We consider 
two classes $\Lambda$:
the class $\Lambda^{{\rm all}}$ 
consists of all linear functionals 
and the class $\Lambda^{{\rm std}}$ 
consists of only function evaluations.

We study (EXP) exponential convergence. This means that
$$
e^{L_2-\mathrm{app},\Lambda}(n,s)
\le C(s)\,q^{\,(n/C_1(s))^{p(s)}}\quad\mbox{for all}\quad n, s \in \NN 
$$
where $q\in(0,1)$, and $C,C_1,p:\NN \rightarrow (0,\infty)$.
If we can take $p(s)=p>0$ for all $s$  
then we speak of (UEXP) uniform exponential convergence.   
We also study EXP and UEXP with
(WT) weak, (PT) polynomial and (SPT) strong polynomial tractability. 
These concepts are defined as follows.
Let $n(\e,s)$ be the minimal $n$ for which 
$e^{L_2-\mathrm{app},\Lambda}(n,s)\le \e$.
Then WT holds iff
$\lim_{s+\log\,\e^{-1}\to\infty}(\log n(\e,s))/(s+\log\,\e^{-1})=0$,
PT holds iff there are 
$c,\tau_1,\tau_2$ such that
$n(\e,s)\le cs^{\tau_1}(1+\log\,\e^{-1})^{\tau_2}$
for all $s$ and $\e\in(0,1)$, and finally SPT holds iff the last estimate 
holds for $\tau_1=0$. 
The infimum of $\tau_2$ for which SPT holds is called
the exponent $\tau^*$ of SPT.   
We prove that the results are the same for both
classes $\Lambda$, and: 
\begin{itemize}
\item
EXP holds for any  $\bsa$, $\bsb$ and
$\omega$.
\item UEXP holds iff
$B:=\sum_{j=1}^\infty1/b_j<\infty$ 
and the largest $p$ 
is $1/B$.
\item
WT+EXP 
holds 
iff $\lim_ja_j=\infty$.
\item
WT+UEXP holds  iff $B<\infty$ and 
$\lim_ja_j=\infty$.
\item 
The notions of PT and 
SPT with EXP or UEXP are equivalent, 
and hold 
iff $B<\infty$ and 
$\alpha^*:=\liminf_{j\to\infty}(\log a_j)/j>0$. Then 
$$
\max(B,(\log3)/\alpha^*)\le
\tau^*\le B+(\log3)/\alpha^*,
$$ 
and $\tau^* = B$ for $\alpha^* =\infty$.
\end{itemize}
\end{abstract}

\section{Introduction}

We study approximation of $s$-variate functions defined 
on the unit cube $[0,1]^s$ with the worst-case error measured in the
$L_2$ norm. 
Multivariate approximation is a problem that has been studied 
in a vast number of papers from many different perspectives. 
We consider analytic periodic functions
belonging to a weighted Korobov 
space.  We present necessary and sufficient conditions 
on the decay of the Fourier coefficients 
under which we can achieve exponential and uniform exponential 
convergence with various notions of tractability.

We approximate functions by 
algorithms that use $n$ information evaluations.
We either allow information evaluations 
from the class $\Lambda^{\rm{all}}$ of
all continuous linear functionals or from
the class
$\Lambda^{\rm{std}}$ of standard information
which consists of only function evaluations. 

For large~$s$, it is important to study how the errors of algorithms
depend not only on $n$ but also on~$s$. 
The information complexity $n^{L_2-\mathrm{app},\Lambda}(\e,s)$ 
is the minimal number $n$ for which
there exists an
algorithm using $n$ information evaluations from the class 
$\Lambda\in\{\Lambda^{\rm{all}},\Lambda^{\rm{std}}\}$ with 
an error 
at most $\e$ in the $s$-variate case.
The information complexity 
is proportional to the minimal cost of computing an $\e$-approximation
since linear algorithms are optimal and their cost is proportional to 
$n^{L_2-\mathrm{app},\Lambda}(\e,s)$. 

We would like to control how $n^{L_2-\mathrm{app},\Lambda}(\e,s)$ 
depends on $\e^{-1}$ and $s$. 
In the standard study of tractability, 
see~\cite{NW08,NW10,NW12}, \emph{weak tractability} means that 
$n^{L_2-\mathrm{app},\Lambda}(\e,s)$ is \emph{not} exponentially
dependent  on $\e^{-1}$ and $s$. Furthermore, 
\emph{polynomial tractability} means that
$n^{L_2-\mathrm{app},\Lambda}(\e,s)$ is polynomially bounded 
by $C\,s^{\,q}\,\e^{-p}$ for some $C,q$ and $p$ independent of $\e\in(0,1)$
and $s\in\nat$. If $q=0$ then we have \emph{strong polynomial
tractability}.

Typically, $n^{L_2-\mathrm{app},\Lambda}(\e,s)$ is polynomially 
dependent on $\e^{-1}$ and $s$
for weighted classes of smooth functions.
The notion of weighted function classes means that
the successive variables and groups of variables are moderated by certain
weights. For sufficiently fast decaying weights,
the information complexity
depends at most polynomially on $s$,
and we obtain 
\emph{polynomial} tractability,
or even 
\emph{strong polynomial} tractability.

These notions of tractability are suitable for problems for which
smoothness of functions is
finite. This means that functions are differentiable only finitely many times.
Then the minimal errors of algorithms enjoy polynomial convergence
and are bounded by $C(s)\,n^{-\tau}$, for
some positive 
$C(s)$ which depends only on $s$ 
and some positive $\tau$ which depends on the smoothness
of functions. For many classes of such functions we know the largest $\tau$
which grows with increasing smoothness and decreasing weights.
Furthermore, 
weak tractability holds if $\log\,C(s)=o(s)$, whereas 
polynomial tractability holds if 
$C(s)$ is polynomially dependent on $s$, and
strong polynomial tractability holds if $C(s)$ is uniformly bounded
in $s$. 

It seems to us that the case of analytic or 
infinitely many times differentiable functions
is also of interest. For such classes of
functions we would like to replace polynomial convergence by 
exponential convergence, and study the same notions of tractability 
in terms of $(1+\log\,\e^{-1},s)$ instead of $(\e^{-1},s)$. 
More precisely, let
$e^{L_2-\mathrm{app},\Lambda}(n,s)$ 
be the minimal worst-case error
among all algorithms that use 
$n$ information evaluations from 
a permissible class~$\Lambda$ in the $s$-variate case.
By exponential convergence of the $n$th minimal approximation error
we mean that
$$
e^{L_2-\mathrm{app},\Lambda}(n,s)\le C(s)\, 
q^{\,(n/C_1(s))^{\,p(s)}}\ \ \ \ \mbox{for all}\ \ \ \ n,s\in\NN.
$$
Here, $q\in(0,1)$ is independent of $s$, 
whereas $C,C_1,$ and $p$ are allowed to be dependent on~$s$. 
We speak of uniform exponential convergence 
if $p$ can be replaced by a positive number 
independent of $s$. 
A priori it is not obvious what we should 
require about $C(s), C_1(s)$ and $p(s)$
although, clearly, the smaller $C(s)$ and $C_1(s)$ the better, 
and we would like to have $p(s)$ as large as possible. 
Obviously, if we do not care about the dependence on $s$
then the mere existence of $C(s), C_1 (s)$ and $p(s)$ is enough. 

The last bound on $e^{L_2-\mathrm{app},\Lambda}(n,s)$ yields
$$
n^{L_2-\mathrm{app},\Lambda}(\e,s) \le 
\left\lceil C_1(s) \left(\frac{\log C(s) +
    \log \e^{-1}}{\log q^{-1}}\right)^{1/p(s)}\right\rceil
\ \ \ \ \ \mbox{for all}\ \ \ s\in \NN\ \ \mbox{and}\ \ \e\in (0,1).
$$
Exponential convergence implies that asymptotically with
respect to $\e$ tending to zero, we need $\mathcal{O}(\log^{1/p(s)}
\e^{-1})$ information evaluations to compute 
an $\e$-approximation to functions from the Korobov space. 
(Throughout the paper $\log$ means the natural logarithm 
and $\log^{\,r} x$ means $[\log x]^r$.)

Tractability with exponential or uniform exponential 
convergence means that we would like to replace
$\e^{-1}$ by $1+\log\,\e^{-1}$ 
and guarantee the same properties 
on $n^{L_2-\mathrm{app},\Lambda}(\e,s)$ as for the standard case.
This means that (WT) weak tractability holds iff
$$
\lim_{s+\log\,\e^{-1} \rightarrow \infty}
\frac{\log\,n^{L_2-\mathrm{app},\Lambda}(\e,s)}{s+\log\,\e^{-1}}=0,
$$
whereas (PT) polynomial tractability holds iff there are non-negative
numbers $c,\tau_1,\tau_2$ such that
$$
n^{L_2-\mathrm{app},\Lambda}(\e,s)\le
c\,s^{\tau_1}(1+\log\,\e^{-1})^{\tau_2}\quad\mbox{for all}
\quad s\in \NN,\ \e\in(0,1).
$$
If $\tau_1=0$ in the last bound we speak of 
(SPT) strong polynomial tractability, and then~$\tau^*$ being the infimum 
of $\tau_2$ is called the exponent of SPT. 

For instance, uniform exponential convergence implies weak
tractability if
$$
C(s)=\exp\left(\exp\left(o(s)\right)\right)
\ \ \ \mbox{and}\ \ \ C_1(s)=\exp(o(s))
\ \ \ \ \
\mbox{as}\ \ \ \ \ s\to\infty.
$$
These conditions are rather weak since
$C(s)$ can be almost doubly exponential and $C_1(s)$ almost
exponential in $s$.

Furthermore, uniform exponential convergence 
implies polynomial tractability if for some non-negative $\eta_1$ and
$\eta_2$ we have 
$$
C(s)=\exp\left(\mathcal{O}(s^{\eta_1})\right)
\ \ \ \mbox{and}\ \ \ C_1(s)=\mathcal{O}(s^{\eta_2})
\ \ \ \ \
\mbox{as}\ \ \ \ \ s\to\infty.
$$
If $\eta_1=\eta_2=0$ then we have strong polynomial
tractability.

Uniform exponential convergence with weak, polynomial and strong polynomial
tract\-ability was studied in the papers~\cite{DLPW11}
and~\cite{KPW12} 
for multivariate integration in weighted Korobov spaces
with exponentially fast decaying Fourier coefficients. 
However, the notion of weak tractability was defined 
differently in a more demanding way, see Section~\ref{secint} 
for more details.
In the current paper, we deal with  multivariate approximation 
in the worst-case setting for the same class of functions.
We study exponential and
uniform exponential convergence and various notions of tractability
defined as above. 

We find it interesting that all results presented 
in this paper are exactly the same for both classes 
$\Lambda^{\rm all}$ 
and $\Lambda^{\rm std}$.
This is surprising since the class $\Lambda^{\rm std}$
is much smaller than the class $\Lambda^{\rm all}$.
This is very good news since 
usually in the computational practice we can only use 
function values, i.e., the class $\Lambda^{\rm std}$.
Furthermore, all our results are constructive
or semi-constructive\footnote{Semi-construction is only used for the class
$\Lambda^{\rm std}$ when we want to achieve WT with UEXP, 
see Section~\ref{semiconstruction}.}. 
That is, we provide
algorithms that use only function values and 
for which we achieve exponential and uniform exponential convergence
with WT, PT or SPT.  
The sample points used by these algorithms are from regular grids with
varying mesh-sizes for successive variables. 
Such grids were also successfully used for multivariate 
integration in the previous papers~\cite{DLPW11} and~\cite{KPW12}.

For the Korobov class of functions $f$ considered here, 
the decay of the Fourier coefficients $\widehat f(\bsh)$ is defined by two
sequences $\bsa=\{a_j\}$ and $\bsb=\{b_j\}$, 
and by a parameter $\omega\in(0,1)$. Here $\bsa$ and $\bsb$ are two sequences
of real numbers bounded below by 1, see Section~\ref{secKor} 
for further details. We assume that
$$
\sum_{\bsh\in \ZZ^s}|\widehat
f(\bsh)|^2\,\omega_{\bsh}^{-1} < \infty,
$$
where
$$
\omega_{\bsh}=\omega^{\,\sum_{j=1}^{s}a_j \abs{h_j}^{b_j}}
\qquad\mbox{for all}\qquad \bsh=(h_1,h_2,\dots,h_s)\in\ZZ^s.
$$
We study for which
$(\bsa,\bsb,\omega)$ we have exponential and uniform exponential
convergence without or with various notions of tractability. 
It turns out that 
$\omega$ only effects the factors in our
estimates. These factors go to infinity as $\omega$ tends to one.

We are going to show that  exponential convergence holds 
for any choice of $\bsa$ and $\bsb$, 
whereas 
uniform exponential convergence holds iff
$$
B:=\sum_{j=1}^\infty\frac1{b_j}<\infty,
$$
independently of $\bsa$.
Furthermore, the largest rate $p(s)$ for exponential convergence 
is $1/B(s)$, where
$$
B(s)=\sum_{j=1}^s \frac{1}{b_j},
$$
and for uniform exponential convergence the largest rate $p$ is $1/B$.

We prove that 
(WT+EXP) weak tractability with exponential
convergence holds iff
$$
\lim_{j\to\infty}a_j=\infty,
$$
and (WT+UEXP) weak tractability with uniform exponential convergence
holds iff 
$$
B<\infty\ \ \ \ \ \mbox{and}\ \ \ \ \ \lim_{j\to\infty}a_j=\infty.
$$
The notions of polynomial and
strong polynomial tractability with exponential or uniform exponential 
convergence are equivalent.
Furthermore, 
the strongest notion of tractability, namely strong 
polynomial tractability with uniform exponential convergence,
holds iff 
$$
B<\infty\quad\mbox{and}\quad 
\alpha^*=\liminf_{j\to\infty}\frac{\log\,a_j}j>0,
$$
and then the exponent $\tau^*$ of SPT satisfies
$$
\max\left(B,\frac{\log\,3}{\alpha^*}\right)\le
\tau^*\le B+\frac{\log\,3}{\alpha^*}.
$$

We comment on the assumption that $\alpha^*>0$. This means 
that the $a_j$ are exponentially large in $j$ for large $j$. 
Indeed, $\alpha^*>0$
implies that for any $\delta\in(0,\alpha^*)$ there is $j^*_\delta$
such that
\begin{equation}\label{newexpoaj}
a_j\ge \exp(\delta\, j)
\quad \mbox{for all}\quad j\ge j^*_\delta.
\end{equation}
Obviously, it may happen that $\alpha^*=\infty$. Then we know the
exponent of SPT exactly,
$$
\tau^*=B.
$$
Note that this happens if, for instance, $a_j\ge \exp(\alpha\, b_j)$
for large $j$ and  for some $\alpha>0$. Indeed, then  
$$
\alpha^*\ge \liminf_{j\to\infty}\,\frac{\alpha\, b_j}j =\infty,
$$
since $B<\infty$ implies that $\liminf_{j\to\infty}b_j/j=\infty$.

The rest of the paper is structured as follows. We give detailed 
information on the Korobov space in 
Section~\ref{secKor}, and on $L_2$-approximation 
and tractability in Section~\ref{secl2}. 
Our main results are summarized in Section~\ref{secmain}. 
The proofs for the class $\Lambda^{\rm{all}}$ 
are in Section~\ref{secproofall} using preliminary observations from 
Section~\ref{secprelall}. 
The proofs for the class $\Lambda^{\rm{std}}$ 
are in Section~\ref{secproofstd} using
preliminary observations from 
Section~\ref{secprelstd}. In Section~\ref{secint} we compare 
the approximation problem considered in this paper 
with the integration problem considered in \cite{KPW12}. 
Analyticity of functions from 
the Korobov space considered in this paper is
shown in Section~\ref{analyticfunctions}.

\section{The Korobov space $H(K_{s,\bsa,\bsb})$}\label{secKor}

The Korobov space $H(K_{s,\bsa,\bsb})$ 
discussed in this section is a Hilbert space with a reproducing kernel. 
For general information on reproducing kernel Hilbert 
spaces we refer to~\cite{Aron}.

Let $\bsa=\{a_j\}_{j \ge 1}$ and
$\bsb=\{b_j\}_{j \ge 1}$ be two sequences of real positive weights
such that
\begin{equation}\label{coefficients}
b_j\ge 1\ \ \ \ \ \mbox{and}\ \ \ \ \
a_j\ge1\ \ \ \ \ \ \mbox{for all}\ \ \
j=1,2,\dots\,.
\end{equation}
Throughout the paper we assume, without loss of generality, that 
$$
a_1 \le a_2 \le a_3 \le \ldots.
$$
Fix $\omega\in(0,1)$. Denote
\[
\omega_{\bsh}=\omega^{\sum_{j=1}^{s}a_j \abs{h_j}^{b_j}}
\qquad\mbox{for all}\qquad \bsh=(h_1,h_2,\dots,h_s)\in\ZZ^s.
\]
We consider a Korobov space of complex-valued one-periodic
functions defined on $[0,1]^s$ with a reproducing
kernel of the form
\begin{equation*}
K_{s,\bsa,\bsb}(\bsx,\bsy) =
\sum_{\bsh \in \ZZ^s} \omega_{\bsh}\,
\exp(2\pi
\icomp \bsh \cdot(\bsx-\bsy)) \ \ \ \mbox{for all}\ \ \
\bsx,\bsy\in[0,1]^s,
\end{equation*}
with the usual dot product
$$
\bsh\cdot(\bsx-\bsy)=\sum_{j=1}^sh_j(x_j-y_j),
$$ 
where
$h_j,x_j,y_j$ are the $j$th components of the vectors
$\bsh,\bsx,\bsy$, respectively, and $\icomp=\sqrt{-1}$.

The kernel $K_{s,\bsa,\bsb}$ is well
defined since
\begin{equation}\label{finitesumweights}
|K_{s,\bsa,\bsb}(\bsx,\bsy)|\le
K_{s,\bsa,\bsb}(\bsx,\bsx)=
\prod_{j=1}^s \left(1+2 \sum_{h=1}^{\infty} \omega^{a_j h^{b_j}}\right)<\infty.
\end{equation}
The last series is indeed finite since
$$
\sum_{h=1}^{\infty} \omega^{a_j h^{b_j}}\le
\sum_{h=1}^{\infty} \omega^{h}=\frac{\omega}{1-\omega}<\infty.
$$

The Korobov space with reproducing kernel
$K_{s,\bsa,\bsb}$ is a reproducing kernel Hilbert space 
and is denoted by $H(K_{s,\bsa,\bsb})$. 
We suppress the dependence on $\omega$ in the notation
since $\omega$ will be fixed throughout the paper and $\bsa$ and
$\bsb$ will be varied.

Clearly, functions from $H(K_{s,\bsa,\bsb})$ are infinitely many
times differentiable, see \cite{DLPW11}. 
They are also analytic
as shown in Section~\ref{analyticfunctions}.

For $f\in H(K_{s,\bsa,\bsb})$ we have
$$
f(\bsx)=\sum_{\bsh\in\ZZ^s} \widehat f(\bsh)\,\exp(2\pi \icomp
\bsh \cdot\bsx) \ \ \ \mbox{for all}\ \ \ \bsx\in [0,1]^s,
$$
where $\widehat{f}(\bsh) =
\int_{[0,1]^s} f(\bsx) \exp(-2 \pi \icomp \bsh \cdot \bsx) \rd \bsx$
is the $\bsh$th Fourier coefficient. 
The inner product of $f$ and $g$ from $H(K_{s,\bsa,\bsb})$ is given by
$$
\il f,g\ir_{H(K_{s,\bsa,\bsb})}=\sum_{\bsh\in \ZZ^s}\widehat f(\bsh)\,
\overline{\widehat g(\bsh)}\, \omega_{\bsh}^{-1}
$$ and the norm of $f$ from $H(K_{s,\bsa,\bsb})$ by 
$$
\|f\|_{H(K_{s,\bsa,\bsb})}=\left(\sum_{\bsh\in \ZZ^s}|\widehat
f(\bsh)|^2\omega_{\bsh}^{-1}\right)^{1/2}<\infty.
$$
Define the functions
\begin{equation}\label{basis}
e_{\bsh}(\bsx)=\exp(2\pi\icomp\,\bsh\cdot\bsx)\,
\omega_{\bsh}^{1/2}\ \ \ \ \
\mbox{for all}\ \ \ \ \ \bsx \in[0,1]^s.
 \end{equation}
Then $\{e_{\bsh}\}_{\bsh\in\ZZ^s}$ is a complete
orthonormal basis of the Korobov space $H(K_{s,\bsa,\bsb})$.

Integration of functions from $H(K_{s,\bsa,\bsb})$ was already
considered in \cite{KPW12} and, in the case $a_j=b_j=1$ for all $j \in
\NN$, also in \cite{DLPW11}. In this paper we consider
the problem of multivariate approximation in the $L_2$ norm
which we shortly call $L_2$-approximation.

\section{$L_2$-approximation}\label{secl2}

In this section we consider $L_2$-approximation of functions from
$H(K_{s,\bsa,\bsb})$.
This problem is defined as an approximation
of the embedding from the Korobov space $H(K_{s,\bsa,\bsb})$ to the space
$L_2([0,1]^s)$, i.e.,
$${\rm EMB}_s:H(K_{s,\bsa,\bsb}) \rightarrow L_2([0,1]^s)\ \ \ \ \
\mbox{given by}\ \ \ \ \ {\rm EMB}_s(f)=f.
$$

Without loss of generality, see, e.g., \cite{TWW88},
we approximate ${\rm EMB}_s$
by  linear algorithms~$A_{n,s}$ of the form
\begin{equation}\label{linalg}
  A_{n,s}(f) = \sum_{k=1}^{n}\alpha_k L_k(f)\ \ \ \
\mbox{for} \ \ \ \ \ f \in H(K_{s,\bsa,\bsb}),
\end{equation}
where each $\alpha_k$ is a function from $L_{2}([0,1]^{s})$ and
each $L_k$ is a continuous linear functional defined on $H(K_{s,\bsa,\bsb})$
from a permissible class $\Lambda$ of information. We consider two
classes:

\begin{itemize}
\item $\Lambda=\Lambda^{\mathrm{all}}$ , the class of all continuous
linear functionals defined on $H(K_{s,\bsa,\bsb})$.
Since  $H(K_{s,\bsa,\bsb})$ is a Hilbert space then for every $L_k\in
\Lambda^{\mathrm{all}}$ there exists a 
function $f_k$ from $H(K_{s,\bsa,\bsb})$ such that
$L_k(f)=\il f,f_k\ir_{H(K_{s,\bsa,\bsb})}$ for all $f\in
H(K_{s,\bsa,\bsb})$.
\item $\Lambda=\Lambda^{\mathrm{std}}$, the class of
standard information consisting only of function evaluations. 
That is, $L_k\in\Lambda^{\mathrm{std}}$ iff there exists
$\bsx_k\in[0,1]^{s}$ such that $L_k(f)=f(\bsx_k)$ for all $f\in
H(K_{s,\bsa,\bsb})$.
\end{itemize}

Since $H(K_{s,\bsa,\bsb})$ is a reproducing kernel Hilbert space,
function evaluations are continuous linear functionals and therefore
$\Lambda^{\mathrm{std}}\subseteq \Lambda^{\mathrm{all}}$. More
precisely,
$$
L_k(f)=f(\bsx_k)=\il
f,K_{s,\bsa,\bsb}(\cdot,\bsx_k)\ir_{H(K_{s,\bsa,\bsb})}
\ \ \ \mbox{and}\ \ \
\|L_k\|=\|K_{s,\bsa,\bsb}\|_{H(K_{s,\bsa,\bsb})}=
K^{1/2}_{s,\bsa,\bsb}(\bsx_k,\bsx_k).
$$

The \textit{worst-case error} of the algorithm $A_{n,s}$ is
defined as
\[
  e^{L_2-\mathrm{app}}(H(K_{s,\bsa,\bsb}),A_{n,s})
  :=  \sup_{f \in H(K_{s,\bsa,\bsb}) \atop \norm{f}_{H(K_{s,\bsa,\bsb})}\le 1}
  \norm{f-A_{n,s}(f)}_{L_{2}([0,1]^s)}.
\]
Let $e^{L_2-\mathrm{app},\Lambda}(n,s)$ be the $n$th minimal
worst-case error, 
$$
e^{L_2-\mathrm{app},\Lambda}(n,s) = \inf_{A_{n,s}}
e^{L_2-\mathrm{app}}(H(K_{s,\bsa,\bsb}),A_{n,s}),
$$ 
where the infimum is taken
over all linear algorithms $A_{n,s}$ using information
from the class $\Lambda$.
For $n=0$ we simply approximate $f$ by zero, and
the initial error is
$$
e^{L_2-\mathrm{app},\Lambda}(0,s) = \|{\rm EMB}_s\|= 
\sup_{f \in H(K_{s,\bsa,\bsb}) \atop \norm{f}_{H(K_{s,\bsa,\bsb})}\le
  1} 
\norm{f}_{L_{2}([0,1]^s)} = 1.
$$
This means that $L_2$-approximation is well normalized for all
$s\in\NN$.

\vskip 1pc

We study exponential convergence in this paper.
Suppose first that $s\in\NN$ is fixed.
Then  we hope that everyone would agree that exponential
convergence for $e^{L_2-\mathrm{app},\Lambda}(n,s)$ means
that there exist functions $q:\NN\to(0,1)$ and $p,C:\NN\to (0,\infty)$
 such that
$$
e^{L_2-\mathrm{app},\Lambda}(n,s)\le C(s)\, q(s)^{\,n^{\,p(s)}}
\ \ \ \ \ \mbox{for
all} \ \ \ \ \ n\in \NN.
$$

Obviously, the functions $q(\cdot)$ and $p(\cdot)$ are not uniquely defined.
For instance, we can take an arbitrary number $q\in(0,1)$,
define the function $C_1$ as
$$
C_1(s)=\left(\frac{\log\,q}{\log\,q(s)}\right)^{1/p(s)}
$$
and then
$$
C(s)\, q(s)^{\,n^{\,p(s)}}=C(s)\,q^{\,(n/C_1(s))^{p(s)}}.
$$
We prefer to work with the latter bound which was already considered
in~\cite{KPW12} for multivariate integration.

We say that we achieve  \emph{exponential convergence}
for $e^{L_2-\mathrm{app},\Lambda}(n,s)$ if
there exist a number $q\in(0,1)$ and 
functions $p,C,C_1:\NN\to (0,\infty)$ such that
\begin{equation}\label{exrate}
e^{L_2-\mathrm{app},\Lambda}(n,s)\le C(s)\, q^{\,(n/C_1(s))^{\,p(s)}}
\ \ \ \ \ \mbox{for
all} \ \ \ \ \ n\in \NN.
\end{equation}
If \eqref{exrate} holds we would like to find the largest possible
rate $p(s)$ of exponential convergence
defined as
\begin{equation}\label{exratemax}
p^*(s)=\sup\{\,p(s)\ :\ \ p(s)\ \ \mbox{satisfies \eqref{exrate}}\,\}.
\end{equation}

We say that we achieve \emph{uniform exponential convergence}
for $e^{L_2-\mathrm{app},\Lambda}(n,s)$ if the function $p$
in \eqref{exrate} can be taken as a constant function, i.e.,
$p(s)=p>0$ for all $s\in\NN$. Similarly, let
$$
p^*=\sup\{\,p\ :\ \ p(s)=p>0\ \ \mbox{satisfies \eqref{exrate} for all
$s\in\NN$}\,\}
$$
denote the largest rate of uniform exponential convergence.
\vskip 1pc

For $\varepsilon\in (0,1)$, $s\in \NN$, and
$\Lambda\in\{\Lambda^{\mathrm{all}},\Lambda^{\mathrm{std}}\}$,
the \emph{information complexity} is defined as
\[
  n^{L_2-\mathrm{app},\Lambda}(\varepsilon,s):=
  \min\left\{n\,:\,e^{L_2-\mathrm{app},\Lambda}(n,s)\le\varepsilon \right\}.
\]
Hence, $n^{L_2-\mathrm{app},\Lambda}(\varepsilon,s)$ is the minimal
number of information evaluations from $\Lambda$
which is required  to reduce the initial error
$e_{0,s}^{L_2-\mathrm{app}}$, which is one in our case, by a
factor of $\varepsilon \in (0,1)$. Clearly
$$
n^{L_2-\mathrm{app},\Lambda^{\mathrm{std}}}(\e,s)
\ge
n^{L_2-\mathrm{app},\Lambda^{\mathrm{all}}}(\e,s).
$$

We are ready to define tractability concepts similarly 
as in \cite{DLPW11} and \cite{KPW12}.
We stress again that these concepts correspond to the standard
concepts of tractability with  $\e^{-1}$ replaced by
$1+\log\,\e^{-1}$. We say that we have:
\begin{itemize}
\item \emph{Weak Tractability (WT)} if
$$
\lim_{s+\log\,\e^{-1}\to\infty}\frac{\log\
  n^{L_2-\mathrm{app},\Lambda}(\varepsilon,s)}{s+\log\,\e^{-1}}=0.
$$

Here we set $\log\,0=0$ by convention.
\item \emph{Polynomial Tractability (PT)} if there exist non-negative
  numbers $c,\tau_1,\tau_2$ such that
$$
n^{L_2-\mathrm{app},\Lambda}(\varepsilon,s)\le
c\,s^{\,\tau_1}\,(1+\log\,\e^{-1})^{\,\tau_2}\ \ \ \ \ \mbox{for all}\ \ \ \
s\in\NN, \ \e\in(0,1).
$$
\item
\emph{Strong Polynomial Tractability (SPT)} if there exist non-negative
  numbers $c$ and $\tau$  such that
$$
n^{L_2-\mathrm{app},\Lambda}(\varepsilon,s)\le
c\,(1+\log\,\e^{-1})^{\,\tau}\ \ \ \ \ \mbox{for all}\ \ \ \
s\in\NN, \ \e\in(0,1).
$$
The exponent $\tau^*$ of strong polynomial tractability is defined as
the infimum of $\tau$ for which  strong polynomial tractability holds.
\end{itemize}

A few comments of these notions are in order.
As in \cite{DLPW11}, we note that if \eqref{exrate} holds then
\begin{equation}\label{exrate2}
n^{L_2-\mathrm{app},\Lambda}(\e,s)
\le \left\lceil C_1(s) \left(\frac{\log C(s) +
    \log \e^{-1}}{\log q^{-1}}\right)^{1/p(s)}\right\rceil
\ \ \ \ \ \mbox{for all}\ \ \ s\in \NN\ \ \mbox{and}\ \ \e\in (0,1).
\end{equation}
Furthermore, if~\eqref{exrate2} holds then
$$
e^{L_2-\mathrm{app},\Lambda}(n+1,s)\le C(s)\, q^{\,(n/C_1(s))^{\,p(s)}}\ \
\ \ \ \mbox{for all}\ \ \ s,n\in \NN.
$$

This means that~\eqref{exrate} and~\eqref{exrate2} are practically
equivalent. Note that $1/p(s)$ determines the power of $\log\,\e^{-1}$
in the information complexity,
whereas $\log\,q^{-1}$ effects only the multiplier of $\log^{1/p(s)}\e^{-1}$.
{}From this point of view, $p(s)$ is more
important than $q$. That is why
we would like to have~\eqref{exrate} with the largest possible $p(s)$.
We shall see how to find such $p(s)$
for the parameters $(\bsa,\bsb,\omega)$ of the weighted Korobov space.

Exponential convergence implies that asymptotically, with
respect to $\e$ tending to zero, we need $\mathcal{O}(\log^{1/p(s)}
\e^{-1})$ information evaluations to compute an
$\e$-approximation to functions from the Korobov space. However, it is not
clear how long we have to wait to see this nice asymptotic
behavior especially for large $s$. This, of course, depends on
how $C(s),C_1(s)$ and $p(s)$ depend on $s$. This is
the subject of tractability which is extensively studied
in many papers. So far tractability has been studied 
in terms of $s$ and $\varepsilon^{-1}$. The current state of 
the art on tractability can be found in~\cite{NW08,NW10,NW12}. 
In this paper we follow the approach of \cite{DLPW11} and \cite{KPW12} 
and we study tractability in terms of $s$ and $1+\log
\varepsilon^{-1}$. 
In particular, weak tractability means that
we rule out the cases for which
$n^{L_2-\mathrm{app},\Lambda}(\e,s)$
depends exponentially on $s$ and $\log\,\e^{-1}$.

For instance, assume that~\eqref{exrate} holds.  Then 
uniform exponential convergence implies weak
tractability if
$$
C(s)=\exp\left(\exp\left(o(s)\right)\right)
\ \ \ \mbox{and}\ \ \ C_1(s)=\exp(o(s))
\ \ \ \ \
\mbox{as}\ \ \ \ \ s\to\infty.
$$
These conditions are rather weak since
$C(s)$ can be almost doubly exponential and $C_1(s)$ almost
exponential in $s$.
The definition of polynomial (and strong polynomial) tractability implies
that we have uniform exponential
convergence with $C(s)=\de$ (where $\de$ denotes $\exp (1)$), 
$q=1/\de$, $C_1(s)=c\,s^{\,\tau_1}$ and $p=1/\tau_2$.
For strong polynomial tractability $C_1(s)=c$ and $\tau^*\le1/p^*$.

If~\eqref{exrate2} holds then we have polynomial tractability  if
$p:=\inf_sp(s)>0$ and
there exist non-negative numbers $A,A_1$ and $\eta,\eta_1$ such that
$$
C(s)\le \exp\left(A s^{\eta}\right)\ \ \ \mbox{and}\ \ \
C_1(s)\le A_1\,s^{\eta_1}\ \ \ \ \
\mbox{for all}\ \ \ \ \
s\in\NN.
$$

The condition on $C(s)$ seems to be quite weak since even for
singly exponential $C(s)$ we have polynomial
tractability. Then $\tau_1=\eta_1+\eta/p$ and $\tau_2=1/p$.
Strong polynomial tractability holds
if $C(s)$ and $C_1(s)$ are uniformly bounded in $s$, and then
$\tau^*\le 1/p$.

\section{The main results}\label{secmain}

We first present the main results of this paper. We will be using the
following notational abbreviations
\begin{center}
EXP\qquad UEXP\qquad WT\qquad PT\qquad SPT\\
WT+EXP\qquad \ PT+EXP\qquad \ SPT+EXP\\
WT+UEXP\qquad  PT+UEXP\quad \ SPT+UEXP
\end{center}
to denote exponential and uniform exponential convergence,
weak, polynomial and strong polynomial tractability,
as well as
weak, polynomial and strong polynomial tractability with
exponential or uniform exponential convergence.
We want to find relations between these concepts as well as
necessary and sufficient conditions on $\bsa$ and $\bsb$ for which these
concepts hold. As we shall see, many of these concepts are equivalent.

\begin{theorem}\label{mainresult}
Consider $L_2$-approximation defined over the Korobov space with
kernel $K_{s,\bsa,\bsb}$ with arbitrary sequences $\bsa$ and 
$\bsb$ satisfying~\eqref{coefficients}.
The following results hold for both classes 
$\Lambda^{\rm{all}}$ and $\Lambda^{\rm{std}}$.
\begin{enumerate}[label=\arabic*,start=1] 
\item\label{allexp}
EXP holds for arbitrary $\bsa$ and $\bsb$ and
$$
p^{*}(s)=1/B(s) \ \ \ \ \ \mbox{with}\ \ \ \ \ B(s):=\sum_{j=1}^s\frac1{b_j}.
$$
This implies that
$$
\mbox{WT}\  \Leftrightarrow\   \mbox{WT+EXP},\ \ \ \ \
\mbox{PT}\  \Leftrightarrow\  \mbox{PT+EXP},\ \ \ \ \
\mbox{SPT}\ \Leftrightarrow\   \mbox{SPT+EXP}.
$$
\item\label{alluexp} 
UEXP holds iff $\bsa$ is an arbitrary sequence
and $\bsb$ such that
$$
B:=\sum_{j=1}^\infty\frac1{b_j}<\infty.
$$
If so then $p^*=1/B$ and
$$
\mbox{WT}\  \Leftrightarrow\   \mbox{WT+UEXP},\ \ \ \ \
\mbox{PT}\  \Leftrightarrow\  \mbox{PT+UEXP},\ \ \ \ \
\mbox{SPT}\ \Leftrightarrow\   \mbox{SPT+UEXP}.
$$

\item\label{allpt}
Polynomial (and, of course, strong polynomial)
tractability implies uniform exponential convergence,
$
\mbox{PT}\  \ \Rightarrow\ \  \mbox{UEXP},
$
i.e.,
$$
\mbox{PT}\  \Leftrightarrow\  \mbox{PT+UEXP},\ \ \ \ \
\mbox{SPT}\ \Leftrightarrow\   \mbox{SPT+UEXP}.
$$
\item\label{allwt}
We have
\begin{eqnarray*}
\mbox{WT}\ &\Leftrightarrow&\ \lim_{j\to\infty}a_j=\infty,\\
\mbox{WT+UEXP}\ &\Leftrightarrow&\  B<\infty\ \ \mbox{and}\ \
\lim_{j\to\infty}a_j=\infty.
\end{eqnarray*}
\item\label{allequiv}
The following notions are equivalent:
$$
\ \ \ \ \mbox{PT}\ \Leftrightarrow\ \mbox{PT+EXP}\ \Leftrightarrow\
\mbox{PT+UEXP}\ \Leftrightarrow\ \mbox{SPT}\ \Leftrightarrow\
\mbox{SPT+EXP}\ \Leftrightarrow\ \mbox{SPT+UEXP}.
$$
\item \label{allspt}
SPT+UEXP holds iff $b_j^{-1}$'s are summable and $a_j$'s are
exponentially large in $j$, i.e.,
$$
B:=\sum_{j=1}^\infty\frac1{b_j}<\infty\quad\mbox{and}\quad
\alpha^*:=\liminf_{j\to\infty}\frac{\log\,a_j}j>0.
$$
Then the exponent $\tau^*$ of SPT satisfies
$$
\max\left(B,\frac{\log\,3}{\alpha^*}\right)\le
\tau^*\le B+\frac{\log\,3}{\alpha^*}.
$$
In particular, if $\alpha^* =\infty$ then $\tau^* = B$.
\end{enumerate}
\end{theorem}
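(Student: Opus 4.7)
The plan is to pivot the entire argument on the spectral structure of the embedding $\mathrm{EMB}_s$. The basis $\{e_{\bsh}\}_{\bsh\in\ZZ^s}$ from~\eqref{basis} is orthonormal in $H(K_{s,\bsa,\bsb})$ and maps under $\mathrm{EMB}_s$ to the orthogonal system $\{\omega_{\bsh}^{1/2}\exp(2\pi\icomp\bsh\cdot)\}$ in $L_2([0,1]^s)$, so the squared singular values of $\mathrm{EMB}_s$ are exactly the $\omega_{\bsh}$. Standard Hilbert-space theory of linear information then gives
\[
\bigl[e^{L_2\text{-app},\Lambda^{\rm all}}(n,s)\bigr]^2 = \lambda_{n+1},
\]
where $\lambda_1\ge\lambda_2\ge\cdots$ is the decreasing rearrangement of $\{\omega_{\bsh}\}$, and identifies the optimal algorithm as the orthogonal projection onto the top $n$ Fourier modes. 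Every statement for $\Lambda^{\rm all}$ therefore reduces to controlling the counting function
\[
N_s(T) := \Bigl|\bigl\{\bsh\in\ZZ^s : \textstyle\sum_{j=1}^s a_j|h_j|^{b_j}\le T\bigr\}\Bigr|,
\]
since $e^{L_2\text{-app},\Lambda^{\rm all}}(n,s)\le \omega^{T/2}$ iff $N_s(T)\le n$. The elementary sandwich
\[
\prod_{j=1}^s\bigl(1+2\lfloor(T/(sa_j))^{1/b_j}\rfloor\bigr)\ \le\ N_s(T)\ \le\ \prod_{j=1}^s\bigl(1+2\lfloor(T/a_j)^{1/b_j}\rfloor\bigr)
\]
drives the rest of the argument.

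For item~\ref{allexp}, taking logs of the upper product gives $\log N_s(T)\le B(s)\log T$ plus a lower-order $s$-dependent constant, hence $e^{L_2\text{-app},\Lambda^{\rm all}}(n,s)\le C(s)\,\omega^{(n/C_1(s))^{1/B(s)}}$ and $p^*(s)\ge 1/B(s)$; the matching upper bound on $p^*(s)$ comes from the lower product. Item~\ref{alluexp} follows: UEXP needs a single $p\le p^*(s)$ for all $s$, which is possible iff $B(s)$ stays bounded, i.e.\ $B<\infty$, in which case $p^*=1/B$. Item~\ref{allpt} observes that any bound $n(\e,s)\le cs^{\tau_1}(1+\log\e^{-1})^{\tau_2}$ inverts to UEXP with rate $p=1/\tau_2$, which by item~\ref{alluexp} forces $B<\infty$. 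Item~\ref{allequiv} merely packages these equivalences.

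For items~\ref{allwt} and~\ref{allspt} I extract quantitative WT and SPT conditions from $N_s$. Restricting to $\bsh\in\{-1,0,1\}^s$ yields $N_s(T)\ge\prod_{j=1}^s(1+2\,\mathbf{1}[a_j\le T])$; if $a_j\not\to\infty$ a positive fraction of factors equals $3$ for moderate $T$, giving forbidden exponential dependence in $s$ and breaking WT. Conversely, $a_j\to\infty$ together with the monotonicity $a_1\le a_2\le\cdots$ allows a Ces\`aro-type splitting of $\log N_s(T)$ into a block of size $o(s)$ with $a_j\le T$ and a tail with $a_j>T$, yielding $\log n(\e,s)=o(s+\log\e^{-1})$. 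The UEXP variant adds $B<\infty$ in parallel. For SPT+UEXP, splitting the upper product at the cutoff $a_j\le T$ into a small-$j$ block of length at most $(\log T)/\alpha^*$ (each factor bounded by $3\,T^{1/b_j}$) and a tail of factors equal to $1$ yields $\log N_s(T)\le B\log T+(\log 3)(\log T)/\alpha^*$, whence $\tau^*\le B+\log 3/\alpha^*$. The matching lower bound is obtained from two independent restrictions: taking $s\to\infty$ in the left product gives $\tau^*\ge B$, while choosing indices $J$ along the $\liminf$-subsequence of $(\log a_J)/J$ and restricting to $\bsh\in\{-1,0,1\}^J$ with $Ja_J\le T$ gives $n(\e,s)\ge 3^J\approx T^{\log 3/\alpha^*}$, hence $\tau^*\ge\log 3/\alpha^*$. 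The main technical obstacle is pinning down the constant $\log 3$ sharply and verifying that $\alpha^*=\infty$ collapses the small-$j$ block entirely, giving $\tau^*=B$ exactly.

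Finally, because $\Lambda^{\rm std}\subseteq\Lambda^{\rm all}$, every lower bound transfers automatically. The additional work for $\Lambda^{\rm std}$ is algorithmic: I would use tensor products of $s$ one-dimensional trigonometric interpolation operators on equispaced grids whose per-coordinate mesh size is tuned to $a_j$ and $b_j$, along the lines of~\cite{DLPW11,KPW12}. The aliasing error is controlled by the sum of $\omega_{\bsh}$ outside the interpolation index set, which by the same counting estimates matches the $\Lambda^{\rm all}$ rate up to constants absorbable into $C(s)$ and $C_1(s)$. The only place where a closed-form construction breaks down is WT+UEXP, because there the optimal per-coordinate resolution depends on the (possibly non-explicit) rate at which $a_j\to\infty$; this is precisely the \emph{semi-constructive} case flagged in the paper's footnote.
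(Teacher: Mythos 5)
Your overall architecture (spectral reduction to the counting function $N_s(T)$, the box-product sandwich, grid-based algorithms for $\Lambda^{\rm std}$, and the lower-bound transfers) matches the paper for Points~\ref{allexp}, \ref{alluexp}, \ref{allpt}, \ref{allequiv} and \ref{allspt}, and your SPT bounds $\max(B,(\log 3)/\alpha^*)\le\tau^*\le B+(\log3)/\alpha^*$ are obtained essentially as in the paper. The genuine gap is the sufficiency direction of WT in Point~\ref{allwt}. Your claim that $a_j\to\infty$ lets you split $\log N_s(T)$ into ``a block of size $o(s)$ with $a_j\le T$ and a tail with $a_j>T$'' is false in the joint limit $s+\log\e^{-1}\to\infty$: since $T\asymp\log\e^{-1}$ may grow with $s$, the block $\{j\le s:\ a_j\le T\}$ can be all of $\{1,\dots,s\}$. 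Concretely, for $a_j=\log(j+1)$, $b_j=1$ and $T\asymp s$, every factor in the upper product is active and the product bound only gives $\log N_s(T)\lesssim s\log T\asymp s\log s$, which is not $o(s+\log\e^{-1})$; yet WT does hold here by the theorem. The box-product upper bound is intrinsically too lossy in this regime (it counts a full box rather than the simplex-like set $\{\sum_j a_j|h_j|^{b_j}\le T\}$), so no choice of splitting rescues the argument. The paper instead uses a Chebyshev-type spectral bound absent from your proposal: for any $\eta\in(0,1)$, $n\lambda_{s,n}^{\eta}\le\sum_{\bsh\in\ZZ^s}\omega_{\bsh}^{\eta}=\prod_{j=1}^s\bigl(1+2\sum_{h\ge1}\omega^{\eta a_j h^{b_j}}\bigr)$, which yields $\log n^{L_2-\mathrm{app},\Lambda^{\rm all}}(\e,s)\le 2\eta\log\e^{-1}+2\sum_{j=1}^s c_j$ with $c_j=\omega^{\eta a_j}/(1-\omega^{\eta a_j})\to0$, so the Ces\`aro mean of the $c_j$ vanishes and letting $\eta\to0$ gives WT.

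Two related points. First, your stated lower bound $N_s(T)\ge\prod_{j=1}^s(1+2\,\mathbf{1}[a_j\le T])$ is incorrect as written, since membership requires $\sum_{j\in\mathrm{supp}(\bsh)}a_j\le T$, not a coordinatewise condition; the necessity of $\lim_j a_j=\infty$ is still recoverable (bounded monotone $a_j$ and $T>a_1+\cdots+a_s\asymp s$ give $N_s(T)\ge 3^s$, as in the paper's Lemma~\ref{lemAsMweighted}). Second, for $\Lambda^{\rm std}$ the same WT issue bites harder than you acknowledge: a grid/aliasing construction whose information cost scales like $\log^{B(s)}(1+\e^{-1})$ cannot give WT when $B(s)$ grows linearly in $s$ (e.g.\ $b_j\equiv1$), so it is not merely WT+UEXP but WT itself (when $B=\infty$) that needs a different mechanism — the paper invokes the semi-constructive conversion of the eigenvalue bound via \cite[Theorem~26.18]{NW12}. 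Also, for SPT with $\Lambda^{\rm std}$ the constants in your aliasing estimate must be shown uniform in $s$ (the paper forces $m_j=1$ for $j\ge j^*_{\beta,\delta}$ using $a_j\ge\exp(\delta j)$); ``absorbable into $C(s)$, $C_1(s)$'' is not sufficient there.
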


We comment on Theorem~\ref{mainresult}. 
We already expressed our surprise in the introduction
that the results are the same for both classes $\Lambda^{\rm std}$
and $\Lambda^{\rm all}$, although the class
$\Lambda^{\rm std}$  is much smaller than the class 
$\Lambda^{\rm all}$. However, the proofs for both classes are
different. We also stress that the results 
are constructive. The corresponding algorithms 
can be found in Section~\ref{secprelall} for the class 
$\Lambda^{\rm all}$ and in Section~\ref{secproofstd} for the class
$\Lambda^{\rm std}$.

Point \ref{allexp} tells us 
that we always have exponential convergence and the best rate is 
$p^*(s)=1/B(s)$. Note that $p^*(s)$ decays with $s$, and
if $B(s)$ goes to infinity then the rate decays to zero.
The smallest rate is for $b_j=1$ for all $j\ge 1$, for which $p^*(s)=1/s$.
Clearly, all tractability notions with or without exponential
convergence are trivially equivalent.

Point \ref{alluexp} addresses uniform exponential convergence which holds iff $b_j^{-1}$'s
are summable, i.e., when $B<\infty$.
Then the best rate of uniform exponential convergence
is $p^*=1/B$.  Obviously, for large $B$ this rate is poor.
We stress that uniform exponential convergence
holds independently of $\bsa$.
Similarly as before, as long as $B<\infty$, tractability notions with
or without uniform exponential convergence are trivially equivalent.

Point \ref{allpt} states that (strong) polynomial tractability implies
uniform exponential convergence, i.e., $B<\infty$. This means that the notion of
polynomial tractability is stronger than the notion of uniform
convergence.

Point \ref{allwt} addresses weak tractability which holds iff
$a_j$'s tend to infinity.
We stress that this holds independently of $\bsb$ and independently
of the rate of
convergence of~$\bsa$ to infinity.
We have weak tractability with uniform
convergence if additionally $B<\infty$.
Hence for $\lim_j a_j=\infty$ and $B=\infty$, weak tractability holds
without uniform exponential convergence. 

Point \ref{allequiv} states that, in particular,
the notions of polynomial tractability
and strong polynomial tractability with uniform exponential convergence
are equivalent. 
 
Point \ref{allspt} presents necessary and sufficient
conditions on strong polynomial tractability
with uniform exponential convergence. We must assume that $B<\infty$ and
$\alpha^*>0$. The last condition means that  
$a_j$'s are exponentially large in $j$  
for large~$j$.
We only know bounds of the exponent  $\tau^*$ 
of strong polynomial tractability. 
Note that for large $B$ or small $\alpha^*$ 
the exponent~$\tau^*$ is large. On the other hand, $\tau^*$ is not large 
if $B$ is not large and $\alpha^*$ is not small. 
We stress that $B$ can be sufficiently small if all $b_j$ are
sufficiently large, whereas $\alpha^*$ can be sufficiently large if
$a_j$ are large enough. In fact, we may even have $\alpha^*=\infty$.
This holds if $a_j$ goes to infinity faster than $C^j$ for any $C>1$. 
We already noticed in the introduction that this holds, for example, if
$a_j \ge \exp(\delta\, b_j)$ for large $j$ and for  some
$\delta > 0$. For $\alpha^*=\infty$ we know the exponent of SPT exactly,
$$
\tau^*=B.
$$

\section{Preliminaries for the class $\Lambda^{\mathrm{all}}$}\label{secprelall}

The information complexity is known for the class
$\Lambda^{\mathrm{all}}$, see, e.g., \cite[Chapter~4,
Section~5.8]{TWW88}). It depends on the eigenpairs of the operator
$$
W_s={\rm EMB}_s^*\,{\rm EMB}_s: \ H(K_{s,\bsa,\bsb})\to
H(K_{s,\bsa,\bsb}),
$$
which in our case is given by
$$
W_sf=\sum_{\bsh\in\ZZ^s}\omega_{\bsh}\il
f,e_{\bsh}\ir_{H(K_{s,\bsa,\bsb})}e_{\bsh}
$$
with $e_{\bsh}$ given by~\eqref{basis}. 
Hence, the eigenpairs of $W_s$ are $(\omega_{\bsh},e_{\bsh})$ since
$$
W_se_{\bsh}=\omega_{\bsh}e_{\bsh}=
\omega^{\,\sum_{j=1}^sa_j|h_j|^{b_j}}\, e_{\bsh}\ \ \ \ \
\mbox{for all}\ \ \ \ \ \bsh\in \ZZ^s.
$$
It is known that the information complexity is the number of the
eigenvalues $\omega_{\bsh}$ of the operator
$W_s$ which are greater than $\e^2$.
More precisely, for a real $M$  define the set
\begin{eqnarray}\label{eqAsMweighted}
\mathcal{A}(s,M)
& := & \left\{\bsh\in\ZZ^s\ : \ \omega_{\bsh}^{-1}< M\right\}\nonumber \\
& = & \left\{\bsh\in\ZZ^s\ : \ \omega^{-\sum_{j=1}^{s}a_j
\abs{h_j}^{b_j}}< M\right\}.
\end{eqnarray}
Then
\begin{equation}\label{opt_std}
n^{L_2-\mathrm{app},\Lambda^{\mathrm{all}}}(\e,s)
=\abs{\mathcal{A}(s,\varepsilon^{-2})}.
\end{equation}
Furthermore, the optimal algorithm in the class
$\Lambda^{\mathrm{all}}$ is the truncated Fourier series
\[
  A_{n,s}^{(\mathrm{opt})}(f)(\bsx)
  :=  \sum_{\bsh\in\mathcal{A}(s,\varepsilon^{-2})}
\il f,e_{\bsh}\ir_{H(K_{s,\bsa,\bsb})}e_{\bsh}=
  \sum_{\bsh\in\mathcal{A}(s,\varepsilon^{-2})}
  \widehat{f}(\bsh) \exp(2 \pi \icomp \bsh \cdot\bsx),
\]
where $n = \abs{\mathcal{A}(s,\varepsilon^{-2})}$, which ensures
that the worst-case error satisfies
$$
e^{L_2-\mathrm{app}}
(H(K_{s,\bsa,\bsb}),A_{n,s}^{(\mathrm{opt})})\le\varepsilon.
$$

For the proof of Theorem \ref{mainresult}
and also for the further considerations in this
paper we need a few properties of the set $\mathcal{A}(s,M)$
and its cardinality.
Clearly,  $\mathcal{A}(s,M)=\emptyset$ for all $M\le1$. For
$\e\in(0,1)$, let
$$
x=x(\e):=\frac{\log\,\e^{-2}}{\log\,\omega^{-1}}>0,
$$
and
$$
n(x,s):=
\left|\left\{\bsh\in \ZZ^s\,:\ \sum_{j=1}^sa_j|h_j|^{b_j}<x\right\}\right|.
$$
Then
$$
n^{L_2-\mathrm{app},\Lambda^{\mathrm{all}}}(\e,s)=
|\mathcal{A}(s,\e^{-2})|=n(x,s).
$$
We have $n(x,s)=1$ for all $x\in(0,a_1]$ and
\begin{eqnarray*}
n(x,1)&=&2\left\lceil (x/a_1)^{1/b_1}\right\rceil -1,\\
n(x,s)&=&n(x,s-1)+2\sum_{h=1}^{\left\lceil
    (x/a_s)^{1/b_s}\right\rceil-1}n(x-a_sh^{b_s},s-1).
\end{eqnarray*}

Clearly, $n(y,s)\ge n(x,s)\ge n(x,s-1)$ for all $y\ge x>0$ and
$s\ge2$. Note that for $x\le a_s$, the last sum in $n(x,s)$ is zero
and $n(x,s)=n(x,s-1)$. For $x>a_1$, define
$$
j(x)=\sup\{\,j \in\NN \ :  \ x>a_j\,\}.
$$
For $\lim_ja_j<\infty$ we have $j(x)=\infty$ for large $x$. For
$\lim_ja_j=\infty$, we can replace the supremum in $j(x)$ by the
maximum, and $j(x)$ is finite for all $x$. However, $j(x)$ tends to
infinity with $x$.

If $j(x)$ is finite then
$$
n(x,s)=n(x,j(x))\ \ \ \ \ \mbox{for all} \ \ \ \ \ s\ge j(x),
$$
and therefore, if $j(x)<\infty$ then
$$
\lim_{s\to\infty}\frac{\log\,n(x,s)}{s+x}=0.
$$
We now prove the following lemma.
\begin{lemma}\label{lemAsMweighted}\ \
\begin{itemize}
\item
For $x>a_1+a_2+\cdots+a_s$ we have
$$
n(x,s)\ge 3^s.
$$
\item
For $x>a_1$ and for arbitrary $\alpha_j\in[0,1]$ we have
\begin{eqnarray*}
n(x,s)&\ge&
\prod_{j=1}^{\min(s,j(x))}\left(2\left\lceil
\left(\frac{x}{a_j}\ (1-\alpha_j)\prod_{k=j+1}^s
\alpha_k\right)^{1/b_j}
\right\rceil-1\right),\\
n(x,s)&\le&
\prod_{j=1}^{\min(s,j(x))}\left(2\left\lceil
\left(\frac{x}{a_j}\right)^{1/b_j}\right\rceil-1\right),
\end{eqnarray*}
where the empty product is defined to be $1$.
\item
For $x>a_1$ we have
$$
\prod_{j=1}^s\left(2\left\lceil
\left(\frac{x}{a_j\,s}\right)^{1/b_j}
\right\rceil-1\right)\le n(x,s)\le
\prod_{j=1}^{\min(s,j(x))}\left(2\left\lceil
\left(\frac{x}{a_j}\right)^{1/b_j}\right\rceil-1\right).
$$
\end{itemize}
\end{lemma}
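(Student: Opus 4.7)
The plan is to handle the three parts in sequence, with parts 1 and 2 being independent counting arguments and part 3 reducing to part 2 via a specific choice of the $\alpha_j$'s. Everything boils down to choosing the right product box of lattice points to inscribe in the set $\{\bsh \in \ZZ^s : \sum_{j=1}^s a_j|h_j|^{b_j} < x\}$; no machinery beyond elementary integer counting is needed.

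For the first claim I would take the cube $\{-1,0,1\}^s$: since $b_j \ge 1$ forces $|h_j|^{b_j} \le |h_j| \le 1$ whenever $|h_j| \le 1$, every such point satisfies $\sum_{j=1}^s a_j|h_j|^{b_j} \le a_1+\cdots+a_s < x$ and is thus counted by $n(x,s)$, proving $n(x,s) \ge 3^s$. For the upper bound in the second claim, nonnegativity of the summands in $\sum a_j|h_j|^{b_j} < x$ forces each $a_j|h_j|^{b_j} < x$ individually, i.e., $|h_j| < (x/a_j)^{1/b_j}$, and a short case analysis (integer vs non-integer radius) shows that the number of integers in the open symmetric interval of radius $y$ equals $2\lceil y\rceil - 1$. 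For $j > j(x)$ the radius is at most one, forcing $h_j = 0$ with a trivial factor one that may be omitted, yielding the stated product truncated at $\min(s, j(x))$.

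The lower bound in the second claim is the main step. I would inscribe an explicit product box by distributing the budget $x$ across the coordinates: set $y_j := (1-\alpha_j)\prod_{k=j+1}^s \alpha_k$, constrain $|h_j| < (xy_j/a_j)^{1/b_j}$ for $j \le \min(s, j(x))$, and force $h_j = 0$ for larger $j$. A short telescoping computation gives $\sum_{j=1}^s y_j = 1 - \prod_{k=1}^s \alpha_k \le 1$, so any $\bsh$ in this box obeys $\sum a_j|h_j|^{b_j} < x\sum y_j \le x$ and is counted by $n(x,s)$. Counting coordinatewise contributes the factor $2\lceil (xy_j/a_j)^{1/b_j}\rceil - 1$ at each retained coordinate, and multiplying gives the stated lower bound, with the degenerate cases ($y_j = 0$) rendering the inequality vacuously true.

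For part 3 I would specialize by choosing $\alpha_1 = 0$ and $\alpha_k = (k-1)/k$ for $k \ge 2$. The product $\prod_{k=j+1}^s (k-1)/k = j/s$ telescopes, so $y_j = 1/s$ uniformly, and the lower bound of part 2 reads $\prod_{j=1}^{\min(s,j(x))}(2\lceil (x/(a_j s))^{1/b_j}\rceil - 1)$. For $j > j(x)$ the argument $(x/(a_j s))^{1/b_j} \le 1$ forces the factor to equal one, so the product may be extended up to $s$ without changing its value; the upper bound in part 3 is inherited verbatim from part 2. The main obstacle throughout is purely bookkeeping around the ceiling function for integer radii and for the degenerate cases (some $y_j = 0$, or $(x/(a_j s))^{1/b_j} \le 1$), which must be handled carefully to justify the truncation point $\min(s, j(x))$, but nothing here is conceptually deep.
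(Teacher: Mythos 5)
Your proof is correct, and for the central second point it takes a genuinely different route from the paper's. The paper first records the recurrence $n(x,s)=n(x,s-1)+2\sum_{h=1}^{\lceil (x/a_s)^{1/b_s}\rceil-1}n(x-a_sh^{b_s},s-1)$, gets the upper bound by iterating it, and gets the lower bound by induction on $s$ using that $x-a_sh^{b_s}>\alpha_s x$ for $h\le\lceil (x(1-\alpha_s)/a_s)^{1/b_s}\rceil-1$. You instead argue directly with product boxes: containment in the box $|h_j|<(x/a_j)^{1/b_j}$ gives the upper bound, and inscribing the box $|h_j|<(xy_j/a_j)^{1/b_j}$ with $y_j=(1-\alpha_j)\prod_{k=j+1}^{s}\alpha_k$, together with the telescoping identity $\sum_{j=1}^{s}y_j=1-\prod_{k=1}^{s}\alpha_k\le 1$, gives the lower bound. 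This is the same budget-splitting idea, but unrolled rather than packaged as an induction, and it is arguably more transparent; your exact count $2\lceil y\rceil-1$ of integers in the open interval $(-y,y)$ and your observation that factors with $j>j(x)$ equal $1$ also make the truncation at $\min(s,j(x))$ and the extension of the product up to $s$ in the third point explicit, whereas the paper leaves that last step implicit. The first point (the cube $\{-1,0,1\}^s$) and the specialization in the third point (your $\alpha_1=0$, $\alpha_k=(k-1)/k$ is the paper's $\alpha_j=(j-1)/j$) coincide with the paper. One small imprecision: when some $y_j=0$ the corresponding factor is $2\lceil 0\rceil-1=-1$, and if an even number of factors degenerate the inequality is not literally vacuous; it still holds, though, because the product then equals the product over the non-degenerate coordinates, which your box argument bounds after forcing $h_j=0$ at the degenerate coordinates -- an easy patch, and the paper's induction is no more careful about this edge case.
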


\begin{proof}
To prove the first point, let
$A_s=\{\,\bsh\in \ZZ^s\ :\ \ h_j\in\{-1,0,1\}\,\}$.
For $\bsh\in A_s$ we have
$$
\sum_{j=1}^sa_j|h_j|^{b_j}\le \sum_{j=1}^sa_j<x.
$$
Hence $3^s=|A_s|\le n(x,s)$, as claimed.

We turn to the second point.
It is easier to prove the upper bound on $n(x,s)$.
{}From the recurrence relation
on $n(x,s)$ we have
$$
n(x,s)\le n(x,s-1)+2\left(
\left\lceil\left(
      \frac{x}{a_s}\right)^{1/b_s}\right\rceil -1\right)\ n(x,s-1)=
\left(2\left\lceil\left(
      \frac{x}{a_s}\right)^{1/b_s}\right\rceil -1\right)\ n(x,s-1).
$$
This yields
$$
n(x,s)\le
\prod_{j=1}^s\left(
2\left\lceil
\left(\frac{x}{a_j}\right)^{1/b_j}\right\rceil-1\right).
$$
If $j > j(x)$, i.e., $x\le a_j$, then the factor
$$
2\left\lceil
\left(\frac{x}{a_j}\right)^{1/b_j}\right\rceil-1=2\cdot1-1=1.
$$
Hence, we can restrict $j$ in the last product to $\min(s,j(x))$
and obtain the desired upper bound on $n(x,s)$.

We turn to the lower bound on $n(x,s)$. Note that $x-a_sh^{b_s}>
\alpha_sx$ for all $h\in\NN$ with
$$
h\le
\left\lceil\left(\frac{x(1-\alpha_s)}{a_s}
\right)^{1/b_s}\right\rceil -1.
$$
Hence
\begin{eqnarray*}
n(x,s)&\ge& n(\alpha_sx,s-1)+
2n(\alpha_sx,s-1)
\left(\left\lceil\left(\frac{x(1-\alpha_s)}{a_s}
\right)^{1/b_s}\right\rceil -1\right)\\
&=&
\left(
2\left\lceil\left(\frac{x(1-\alpha_s)}{a_s}
\right)^{1/b_s}\right\rceil -1\right)\ n(\alpha_sx,s-1).
\end{eqnarray*}
We now apply induction on $s$. For $s=1$ we have
$$
n(x,1)= 2\left\lceil(x/a_1)^{1/b_1}\right\rceil -1\ge 
2\left\lceil((1-\alpha_1)x/a_1)^{1/b_1}\right\rceil -1,
$$
as claimed. Then
\begin{eqnarray*}
n(x,s)&\ge&\left(
2\left\lceil\left(\frac{x(1-\alpha_s)}{a_s}
\right)^{1/b_s}\right\rceil -1\right)\ \prod_{j=1}^{s-1}
\left(2\left\lceil
\left(\frac{\alpha_s x}{a_j}\ (1-\alpha_j)
\prod_{k=j+1}^{s-1}\alpha_k\right)^{1/b_j}
\right\rceil-1\right)\\
&\ge&
\prod_{j=1}^s
\left(2\left\lceil
\left(\frac{x}{a_j}\
  (1-\alpha_j)\prod_{k=j+1}^s\alpha_k\right)^{1/b_j}\right\rceil-1\right)\\
&=&
\prod_{j=1}^{\min(s,j(x))}
\left(2\left\lceil
\left(\frac{x}{a_j}\
  (1-\alpha_j)\prod_{k=j+1}^s\alpha_k\right)^{1/b_j}\right\rceil-1\right),
\end{eqnarray*}
as claimed. This completes the proof of the second point.

To prove the third point, it is enough to take $\alpha_j=(j-1)/j$.
Then for $j=1,2,\dots,s$ we have
$$
(1-\alpha_j)\,\prod_{k=j+1}^s\alpha_k=\frac{\prod_{k=j+1}^s(k-1)}
{j\,\prod_{k=j+1}^sk}=\frac1s,
$$
as claimed. This completes the proof of Lemma~\ref{lemAsMweighted}.
\hfill \qed

\section{The proof of Theorem~\ref{mainresult} 
for $\Lambda^{\rm{all}}$}\label{secproofall}

We are ready to prove Theorem~\ref{mainresult} for the class 
$\Lambda^{\rm{all}}$. 

\subsection{The proof of Point~\ref{allexp}}
{}From the second and third points of
Lemma~\ref{lemAsMweighted} with a fixed $s$ we have
$$
n(x,s)=\Theta(x^{B(s)})\ \ \ \ \ \mbox{as}\ \ \ \ \ x\to\infty.
$$
Therefore there are functions $c_1,c_2:\NN\to (0,\infty)$ such that
$$
c_1(s)\,\log^{B(s)}\,\e^{-1}\le
n^{L_2-\mathrm{app},\Lambda^{\mathrm{all}}}(\e,s)\le
c_2(s)\,\log^{B(s)}\,\e^{-1}
$$
for $\e$ tending to zero.
This implies exponential convergence since
$$
e^{L_2-\mathrm{app},\Lambda^{\mathrm{all}}}(n,s)\le q^{(n/c_2(s))^{1/B(s)}}
\ \ \ \ \ \mbox{with}
\ \ \ \ \ q=\exp(-1).
$$
Hence, $p^*(s)\ge 1/B(s)$. On the other hand, if we have exponential
convergence~\eqref{exrate} then
$$
n^{L_2-\mathrm{app},\Lambda^{\mathrm{all}}}(\e,s)=\mathcal{O}
\left(\log^{1/p(s)}\,\e^{-1}\right)
$$
and $1/p(s)\ge B(s)$, or equivalently, $p(s)\le1/B(s)$.
Hence, $p^*(s)=1/B(s)$,  
as claimed in Point~\ref{allexp}. The rest in this point is clear.

\subsection{The proof of Point~\ref{alluexp}}
Assume now that we have uniform exponential convergence.
Then $e^{L_2-\mathrm{app},\Lambda^{\mathrm{all}}}(n,s)\le
C(s)\,q^{\,(n/C_1(s))^p}$ implies for a fixed $s$ that
$$
n^{L_2-\mathrm{app},\Lambda^{\mathrm{all}}}(\e,s)=\mathcal{O}
(\log^{1/p}\,\e^{-1})\ \ \ \ \ \mbox{as}\ \ \ \ \ \e\to0.
$$
Then $B(s)\le 1/p$ for all $s$. Therefore $B\le 1/p<\infty$ and
$p^*\le 1/B$. On the other hand, if $B<\infty$ then we can set
$p(s)=1/B$ and obtain uniform exponential convergence.
Hence, $p^*\ge 1/B$, and therefore $p^*=1/B$, as claimed.
The rest of Point~\ref{alluexp} is clear.

\subsection{The proof of Point~\ref{allpt}}
PT means that
$$
n^{L_2-\mathrm{app},\Lambda^{\mathrm{all}}}(\varepsilon,s)\le
c\,s^{\,\tau_1}\,(1+\log\,\e^{-1})^{\,\tau_2}.
$$
This implies that
$$
e^{L_2-\mathrm{app},\Lambda^{\mathrm{all}}}(n)\le
\de^{1-(n/c\,s^{\tau_1})^{1/\tau_2}}.
$$
Hence, UEXP
holds with $p=1/\tau_2$. This also yields
the equivalence between various notions of tractability with or
without uniform exponential convergence.

\subsection{The proof of Point~\ref{allwt}}
We first prove that WT implies $\lim_ja_j=\infty$.
We use the first part of
Lemma~\ref{lemAsMweighted}.
For $\delta>0$, take
$x=(1+\delta)(a_1+\cdots+a_s)$, or equivalently
$$
\log\,\e^{-1}=\frac{x}{2}\log \omega^{-1}=
\frac{\log\,\omega^{-1}}2\,(1+\delta)(a_1+a_2+\cdots+a_s).
$$
Then
$$
z_s:=\frac{\log\,n(x,s)}{s+\log\,\e^{-1}}\ge
\frac{s\,\log\,3}{s+\tfrac12\,x\,\log\,\omega^{-1}}
=\frac{\log\,3}{1+\tfrac12\,(1+\delta)y_s\,\log\,\omega^{-1}},
$$
where
$$
y_s=\frac{a_1+a_2+\cdots+a_s}{s}.
$$
WT implies that $\lim_sz_s=0$. This can hold only if
$\lim_sy_s=\infty$ which implies that $\lim_ja_j=\infty$, as
claimed.

Next, we need to prove that
$\lim_ja_j=\infty$ implies WT. The eigenvalues of $W_s$ are
$\omega_{\bsh}$ for all $\bsh\in\ZZ^s$. Let the ordered eigenvalues of
$W_s$ be $\lambda_{s,n}$ for $n\in\nat$ with 
$\lambda_{s,1} \ge \lambda_{s,2}\ge \lambda_{s,3}\ge \ldots$.  Obviously 
$\{\lambda_{s,n}\}_{n\in\nat}=\{\omega_{\bsh}\}_{\bsh\in\ZZ^s}$.
Therefore for any $\eta\in(0,1)$ we have
$$
n\lambda_{s,n}^{\eta}\le\sum_{j=1}^\infty\lambda_{s,j}^{\eta}=
\sum_{\bsh\in\ZZ^s}\omega_{\bsh}^{\eta}=\prod_{j=1}^s\left(1+
2\sum_{h=1}^\infty\omega^{\,\eta\,a_j\,h^{b_j}}\right).
$$
Note that
$$
\sum_{h=1}^\infty\omega^{\,\eta\,a_j\,h^{b_j}}\le 
\sum_{h=1}^\infty\omega^{\,\eta\,a_j\,h}=\frac{\omega^{\eta\,a_j}}
{1-\omega^{\eta\,a_j}}.
$$ 
This proves that 
\begin{equation}\label{bdlambdasn}
\lambda_{s,n}\le\frac{
\prod_{j=1}^s
\left(1+2\,\omega^{\eta\,a_j}/(1-\omega^{\eta\,a_j})
\right)^{1/\eta}}{n^{1/\eta}}.
\end{equation}
Since 
$n^{L_2-\mathrm{app},\Lambda^{\rm all}}(\varepsilon,s)
=\min\{n:\
\lambda_{s,n+1}<\e^2\}$ we conclude that
$$
n^{L_2-\mathrm{app},\Lambda^{\rm all}}(\varepsilon,s)\le 
\frac{\prod_{j=1}^s
\left(1+2\,\omega^{\eta\,a_j}/(1-\omega^{\eta\,a_j})\right)}{\e^{2\eta}}.
$$
Using $\log(1+x)\le x$ for $x\ge0$, this yields
$$
\log\,n^{L_2-\mathrm{app},\Lambda^{\rm all}}(\varepsilon,s)\le 
2\,\eta\,\log\,\e^{-1}\ +\ 2\,\sum_{j=1}^sc_j,
$$ 
where
$$
c_j=\frac{\omega^{\eta\,a_j}}
{1-\omega^{\eta\,a_j}}.
$$
Note that $\lim_ja_j=\infty$ implies that $\lim_jc_j=0$, and 
$\lim_s\sum_{j=1}^sc_j/s=0$. Therefore
$$
\limsup_{s+\log\,\e^{-1}\to\infty}
\frac{\log\,n^{L_2-\mathrm{app},\Lambda^{\rm all}}(\varepsilon,s)}
{s+\log\,\e^{-1}}
\le 2\,\eta.
$$
Since $\eta$ can be arbitrarily small this proves that
$$
\lim_{s+\log\,\e^{-1}\to\infty}
\frac{\log\,n^{L_2-\mathrm{app},\Lambda^{\rm all}}(\varepsilon,s)}
{s+\log\,\e^{-1}}=0.
$$
Hence, WT holds for the class $\Lambda^{\rm{all}}$, as
claimed. 
The rest in this point follows from the previous results.
This completes the proof of Point~\ref{allwt}.

\subsection{The proof of Points~\ref{allequiv} and ~\ref{allspt}}

For Point~\ref{allequiv}, it is enough to prove that PT implies SPT+UEXP.
This will be done by showing that PT implies that $B<\infty$ and
$\alpha^*>0$. Then we show that
$B<\infty$ and $\alpha^*>0$ imply SPT+UEXP
and obtain bounds on the exponent of~SPT.

We know that PT implies UEXP and that 
UEXP implies that $B<\infty$. 
{}From the lower bound of Lemma~\ref{lemAsMweighted}
with  $x=(1+\delta)(a_1+\cdots +a_s)$ and from PT we have
$$
3^s\le n(x,s)\le
C\,s^{\tau_1}\,\left(1+ \frac{1+\delta}{2} (\log\,\omega^{-1})
(a_1+\cdots+a_s)\right)^{\tau_2}\ \ \ \ \mbox{for all}\ \ \ \ s\in\NN.
$$
Since $a_1 \le a_2 \le a_3 \le \ldots$, this yields
$$
s\,a_s\ge a_1+\cdots+a_s\ge \
\frac{2}{(1+\delta)\log\,\omega^{-1}} \
\left[\left(\frac{3^s}{C\,s^{\tau_1}}\right)^{1/\tau_2}-1\right]
\ \ \ \ \ \mbox{for all}\ \ \ \ \ s\in\NN.
$$
Hence,
$$
\alpha^*=\liminf_{s\to\infty}\frac{\log\,a_s}s\ge
\frac{\log\,3}{\tau_2}>0,
$$
as needed. This also shows that $\tau_2\ge (\log3)/\alpha^*$. 
Since this holds for all $\tau_2$ for which we have SPT,
we conclude that the exponent $\tau^*$ of SPT also satisfies
$\tau^*\ge (\log3)/\alpha^*$. Clearly, $\tau^*$ cannot be smaller than
the reciprocal of the exponent $p^*$ of UEXP. Hence, $\tau^*\ge B$. 
This completes this part of the proof as well as the proof of
lower bounds on the exponent of SPT.

Assume now that $B<\infty$ and $\alpha^* \in (0, \infty]$.
{}From~\eqref{newexpoaj} with $\delta\in(0,\alpha^*)$ we have 
$$
a_j\ge \exp(\delta j) \quad\mbox{for all}\quad j\ge
j^*_{\delta}.
$$
Then
$$
j(x)\le \max\left(j^*_{\delta},\frac{\log\,x}{\delta}\right).
$$
For $x>a_1$, the upper bound on $n(x,s)$ from
Lemma~\ref{lemAsMweighted} yields
\begin{eqnarray}\label{bdnxs}
n(x,s)&\le&
\prod_{j=1}^{\min(s,j(x))}
\left(1+2\left(\frac{x}{a_j}\right)^{1/b_j}\right)\nonumber\\
&\le&
\left[\prod_{j=1}^{\min(s,j(x))}\left(\frac{x}{a_j}\right)^{1/b_j}\right] \
3^{\min(s,j(x))}\nonumber \\
&\le&
x^{\,B}\,\max\left(3^{j^*_\delta},x^{\,(\log\,3)/
\delta}\right)\nonumber \\
&\le&3^{j^*_\delta}\,x^{B+(\log\,3)/\delta}.
\end{eqnarray}
Hence, SPT+UEXP holds, as claimed. Furthermore, since $\delta$ can be
arbitrarily close to $\alpha^*$, we conclude that the exponent of SPT satisfies
$$
\tau^*\le B \,+\,\frac{\log\,3}{\alpha^*},
$$
where for $\alpha^* = \infty$ we have $\frac{\log \,3}{\alpha^*} = 0$.
This completes the proof of Point~\ref{allequiv} and 
of Point~\ref{allspt}. The proof of 
the whole theorem for the class $\Lambda^{\rm{all}}$ is now completed.
\end{proof}

\section{Preliminaries for the class $\Lambda^{\mathrm{std}}$}\label{secprelstd}

We state some preliminary observations which will be needed to prove
Theorem~\ref{mainresult} for the class $\Lambda^{\rm{std}}$.
Based on the definition of the set $\mathcal{A}(s,M)$
in~\eqref{eqAsMweighted} for $M>1$, we will study 
approximating $f \in H(K_{s,\bsa,\bsb})$ by algorithms of the
form  
\begin{equation}\label{eqdefAnsM}
A_{n,s,M}(f)(\bsx)= \sum_{\bsh
\in \mathcal{A}(s,M)} \left(\frac{1}{n}\sum_{k=1}^{n} f(\bsx_k)
\exp(-2 \pi \icomp \bsh \cdot \bsx_k) \right) \exp(2 \pi \icomp
\bsh \cdot \bsx),
\end{equation}
where $\bsx\in[0,1]^s$.
Note that $A_{n,s,M}$ is a linear algorithm as in
\eqref{linalg} with 
$$\alpha_k(\bsx)
=\frac{1}{n}  \sum_{\bsh \in
\mathcal{A}(s,M)} \exp(2 \pi \icomp \bsh \cdot (\bsx-\bsx_k))$$
and with $L_k(f)=f(\bsx_k)$
for deterministically chosen sample
points $\bsx_k \in [0,1)^s$ for $1 \le k \le n$.
Hence, $L_k\in\Lambda^{\mathrm{std}}$. 
The choice of $M$ and $\bsx_k$ will be given later.

We first study upper bounds on the worst-case error of $A_{n,s,M}$. 
The following analysis is similar to that in \cite{KSW06}. We have
\begin{eqnarray*}
(f-A_{n,s,M}(f))(\bsx) &=&  
\sum_{\bsh \not \in  \mathcal{A}(s,M)} 
\widehat{f}(\bsh) \exp(2 \pi \icomp \bsh \cdot \bsx)\\
&&+ \sum_{\bsh \in
\mathcal{A}(s,M)}\left(\widehat{f}(\bsh)-\frac{1}{n}
\sum_{k=1}^{n}f(\bsx_k) \exp(-2 \pi \icomp \bsh \cdot
\bsx_k)\right) \exp(2 \pi \icomp \bsh \cdot \bsx).
\end{eqnarray*}
Using Parseval's identity we obtain
\begin{eqnarray}\label{approx_err}
\lefteqn{\|f-A_{n,s,M}(f)\|_{L_2([0,1]^s)}^2}\nonumber\\
& = & \sum_{\bsh \not \in  \mathcal{A}(s,M)} |\widehat{f}(\bsh)|^2
+ \sum_{\bsh \in \mathcal{A}(s,M)}
\left|\widehat{f}(\bsh)-\frac{1}{n}
\sum_{k=1}^n f(\bsx_k) \exp(-2 \pi \icomp \bsh \cdot \bsx_k)\right|^2\nonumber\\
& = & \sum_{\bsh \not \in  \mathcal{A}(s,M)} |\widehat{f}(\bsh)|^2 \nonumber\\
&& + \sum_{\bsh \in \mathcal{A}(s,M)}\left|\int_{[0,1]^s}
f(\bsx) \exp(-2 \pi \icomp \bsh \cdot \bsx) \rd
\bsx-\frac{1}{n}\sum_{k=1}^{n} f(\bsx_k) \exp(-2 \pi \icomp \bsh
\cdot \bsx_k)\right|^2.
\end{eqnarray}
We have 
\begin{equation}\label{approx_err1}
\sum_{\bsh \not \in  \mathcal{A}(s,M)}
|\widehat{f}(\bsh)|^2 = \sum_{\bsh \not \in  \mathcal{A}(s,M)}
|\widehat{f}(\bsh)|^2 \omega_{\bsh}\,\omega_{\bsh}^{-1} \le
\frac{1}{M} \|f\|_{H(K_{s,\bsa,\bsb})}^2.
\end{equation}

For the second term in \eqref{approx_err}, 
we make a specific choice for the points
$\bsx_1,\ldots,\bsx_n$ used in the algorithm $A_{n,s,M}$. 
Namely, we take $\bsx_j$'s from
a regular grid
with different mesh-sizes for successive variables.
Such regular grids have already been studied in~\cite{DLPW11, KPW12}.
We now 
recall their definition. For $s \in \NN$, a regular grid with 
mesh-sizes $m_1,\ldots,m_s
\in \NN$ is defined as the point set 
\begin{equation*}
\cG_{n,s}
=\left\{(k_1/m_1,\ldots, k_s/m_s)\, : \ \  
k_j=0,1,\ldots,m_j -1 \mbox{\ \  for all\  } j=1,2,\ldots ,s\right\},
\end{equation*}
where $n=\prod_{j=1}^sm_j$ is the cardinality of $\cG_{n,s}$.
By $\cG_{n,s}^{\bot}$ we denote the
dual of $\cG_{n,s}$, i.e., 
$$\cG_{n,s}^{\bot}=\{\bsh \in \ZZ^s \, : \, h_j \equiv 0 \pmod{m_j} 
\ \mbox{ for all } \ j=1,2,\ldots,s\}.
$$

We will make use of the following result whose easy proof is omitted.
\begin{lemma}\label{grile}
Let $\cG_{n,s}=\{\bsx_1,\ldots,\bsx_n\}$ be defined as above. 
{}For any $f\in H(K_{s,\bsa,\bsb})$ we have $$\left|\int_{[0,1]^s} f(\bsx)\rd
\bsx - \frac{1}{n}\sum_{k=1}^n f(\bsx_k)\right|=\left|\sum_{\bsh
\in \cG_{n,s}^{\bot} \setminus \{\bszero\}}
\widehat{f}(\bsh)\right|.$$
\end{lemma}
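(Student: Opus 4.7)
The plan is to expand $f$ into its Fourier series and use orthogonality of characters over the regular grid $\cG_{n,s}$. First I would note that since $f \in H(K_{s,\bsa,\bsb})$, the Fourier coefficients $\widehat f(\bsh)$ decay fast enough (by Cauchy–Schwarz applied to the inner product defining the norm, combined with the summability of $\omega_{\bsh}$ seen in \eqref{finitesumweights}) that the series $\sum_{\bsh \in \ZZ^s} \widehat f(\bsh) \exp(2\pi\icomp\,\bsh\cdot\bsx)$ converges absolutely and uniformly. This justifies interchanging summations and evaluations freely in the computation below.

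Next I would substitute the Fourier expansion of $f$ into the cubature average:
$$
\frac{1}{n}\sum_{k=1}^n f(\bsx_k)
= \sum_{\bsh \in \ZZ^s} \widehat f(\bsh)\,\frac{1}{n}\sum_{k=1}^n \exp(2\pi\icomp\,\bsh\cdot\bsx_k).
$$
Using the product structure of $\cG_{n,s}$ with mesh-sizes $m_1,\ldots,m_s$ and $n=\prod_{j=1}^s m_j$, the inner character sum factorizes:
$$
\frac{1}{n}\sum_{k=1}^n \exp(2\pi\icomp\,\bsh\cdot\bsx_k)
=\prod_{j=1}^s \frac{1}{m_j}\sum_{k_j=0}^{m_j-1}\exp\!\left(2\pi\icomp\, h_j k_j/m_j\right).
$$
By the standard identity for geometric sums of roots of unity, each factor equals $1$ if $m_j \mid h_j$ and $0$ otherwise. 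Thus the product equals $1$ if $\bsh\in\cG_{n,s}^{\bot}$ and $0$ otherwise.

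Combining these steps, I obtain
$$
\frac{1}{n}\sum_{k=1}^n f(\bsx_k) = \sum_{\bsh \in \cG_{n,s}^{\bot}} \widehat f(\bsh).
$$
Since $\int_{[0,1]^s} f(\bsx)\rd\bsx = \widehat f(\bszero)$ and $\bszero\in\cG_{n,s}^{\bot}$, subtracting and taking absolute values yields the claimed identity. There is no real obstacle here: the only subtle point is the interchange of summation, which is handled immediately by the absolute convergence of the Fourier series for functions in $H(K_{s,\bsa,\bsb})$; the rest is the standard character-orthogonality calculation on a product grid.
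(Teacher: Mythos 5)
Your proof is correct and is precisely the standard character-orthogonality argument the paper has in mind (the paper states the ``easy proof is omitted''): absolute convergence of the Fourier series via Cauchy--Schwarz and \eqref{finitesumweights} justifies the interchange, and the factorized geometric sums over the product grid pick out exactly the dual lattice $\cG_{n,s}^{\bot}$. Nothing further is needed.
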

\vskip 1pc
For $\bsh \in \ZZ^s$ define $f_{\bsh}(\bsx):=f(\bsx) \exp(-2 \pi
\icomp \bsh \cdot \bsx)$. Note that with $f$ also $f_{\bsh}$
belongs to $H(K_{s,\bsa,\bsb})$ and that
$\widehat{f_{\bsh}}(\bsk)=\widehat{f}(\bsh+\bsk)$. 
{}From Lemma~\ref{grile} we obtain
\begin{eqnarray*}
\left|\int_{[0,1]^s} f_{\bsh}(\bsx)
\rd\bsx-\frac{1}{n}\sum_{k=1}^{n} f_{\bsh}(\bsx_k) \right|^2 & = &
\left|\sum_{\bsl \in \cG_{n,s}^{\bot} 
\setminus \{\bszero\}} \widehat{f_{\bsh}}(\bsl)\right|^2
= \left|\sum_{\bsl \in \cG_{n,s}^{\bot} \setminus \{\bszero\}} 
\widehat{f}(\bsl+\bsh)\right|^2\\
& \le & \left(\sum_{\bsl \in \cG_{n,s}^{\bot} \setminus \{\bszero\}}
\abs{\widehat{f}(\bsl+\bsh)}^2
\omega_{\bsh+\bsl}^{-1}\right)\left(\sum_{\bsl \in \cG_{n,s}^{\bot} 
\setminus \{\bszero\}}\omega_{\bsh+\bsl}\right)\\
& \le & \|f\|_{H(K_{s,\bsa,\bsb})}^2\left(\sum_{\bsl \in
    \cG_{n,s}^{\bot} 
\setminus
\{\bszero\}}\omega_{\bsh+\bsl}\right).
\end{eqnarray*}
Therefore, and using \eqref{approx_err} and \eqref{approx_err1} for any $f\in H(K_{s,\bsa,\bsb})$ with
$\|f\|_{H(K_{s,\bsa,\bsb})} \le 1$, we obtain
\begin{eqnarray}\label{gen_approx_bd}
\norm{f-A_{n,s,M}(f)}_{L_2([0,1]^s)}^2 & \le & \frac{1}{M} +
\sum_{\bsh\in\mathcal{A}(s,M)}\ \sum_{\bsl \in \cG_{n,s}^{\bot}
\setminus \{\bszero\}}\omega_{\bsh+\bsl}.
\end{eqnarray}

It is easy to see that
\[\abs{\ell}^b\le 2^b\left(\abs{h+\ell}^b + \abs{h}^b\right)\]
for any $h,\ell\in\ZZ$ and any $b\in\NN$. 
For $\bsh\in\mathcal{A}(s,M)$ this implies 
\begin{equation}\label{bds_omegakl}
\omega_{\bsh+\bsl}=\omega^{\sum_{j=1}^s a_j|h_j+\ell_j|^{b_j}}\le
\omega^{\,\sum_{j=1}^s 2^{-b_j} a_j\abs{\ell_j}^{b_j}}
  \omega^{-\sum_{j=1}^s a_j\abs{h_j}^{b_j}}\le 
\omega^{\,\sum_{j=1}^s 2^{-b_j} a_j\abs{\ell_j}^{b_j}} M.
\end{equation}

Using \eqref{gen_approx_bd}, \eqref{bds_omegakl} and
Lemma~\ref{lemAsMweighted} 
with $x=(\log M )/(\log \omega^{-1})$, 
we obtain for any $f \in H(K_{s,\bsa,\bsb})$ with
$\norm{f}_{H(K_{s,\bsa,\bsb})} \le 1$,
\begin{eqnarray*}
\norm{f-A_{n,s,M}(f)}_{L_2([0,1]^s)}^2 & \le &
 \frac{1}{M}+
M\abs{\mathcal{A}(s,M)}
\sum_{\bsl \in \cG_{n,s}^{\bot} \setminus
\{\bszero\}}
  \omega^{\,\sum_{j=1}^s 2^{-b_j} a_j\abs{\ell_j}^{b_j}}\\
&\le&\frac{1}{M}+ M \left(\prod_{j=1}^s\left(1+2\left(\frac{\log
M}{a_j \log \omega^{-1}}\right)^{1/b_j}\right)\right)
F_n,
\end{eqnarray*}
where
\[F_n := \sum_{\bsl \in \cG_{n,s}^{\bot} \setminus \{\bszero\}}
\omega^{\,\sum_{j=1}^s 2^{-b_j} a_j\abs{\ell_j}^{b_j}}.\]
This means that
\begin{equation}\label{eqboundMapp}
[e^{L_2-{\rm app}}(H(K_{s,\bsa,\bsb}),
A_{n,s,M})]^2\le \frac{1}{M}+ M \left(\prod_{j=1}^s\left(1+2\left(\frac{\log
M}{a_j \log \omega^{-1}}\right)^{1/b_j}\right)\right)
F_n.
\end{equation}
Furthermore,
\begin{eqnarray*}
 \prod_{j=1}^s\left(1+2\left(\frac{\log
M}{a_j \log \omega^{-1}}\right)^{1/b_j}\right)&\le&
2^s \prod_{j=1}^s\left(1+\left(\frac{\log
M}{\log \omega^{-1}}\right)^{1/b_j}\right)\\
&\le& 2^s \prod_{j=1}^s\left(1+\log^{-1/b_j}
\omega^{-1}\right)\prod_{j=1}^s\left(1+\log^{1/b_j}
M\right).
\end{eqnarray*}
Since $M$ is assumed to be at least 1, we can bound  $1+\log^{1/b_j}
M\le 2M^{1/b_j}$, and obtain
\[\prod_{j=1}^s\left(1+2\left(\frac{\log
M}{a_j \log \omega^{-1}}\right)^{1/b_j}\right)\le 4^s M^{B(s)}
\prod_{j=1}^s
\left(1+\log^{-1/b_j} \omega^{-1}\right),\]
where, as in the previous sections, $B(s):=\sum_{j=1}^s b_j^{-1}$. 
Plugging this into \eqref{eqboundMapp}, we obtain
\begin{equation}\label{eqboundMapp2}
[e^{L_2-{\rm app}}(H(K_{s,\bsa,\bsb}),A_{n,s,M})]^2\le
\frac{1}{M}+ 
M^{B(s)+1} D(s,\omega,\bsb) F_n,
\end{equation}
where
\[D(s,\omega,\bsb):=4^s \prod_{j=1}^s\left(1+\log^{-1/b_j}\omega^{-1}\right).\]

\section{The proof of Theorem~\ref{mainresult} for 
$\Lambda^{\rm{std}}$}\label{secproofstd}

We now present the proofs for the successive points of
Theorem~\ref{mainresult} for the class~$\Lambda^{\mathrm{std}}$.

\subsection{The proof of Point~\ref{allexp}} 

The following proposition will be helpful.
\begin{proposition}\label{th_upper_expo_conv}
For $s\in \NN$ and $\e\in(0,1)$ define
$$
m=\max_{j=1,2,\dots,s}\
\left\lceil \left(
\frac{4^{b_j}}{a_j}\,\frac{\log\left(1+\frac{2s}{\log(1+\eta^2)}\right)}
{\log\,\omega^{-1}}\right)^{B(s)}\,\right\rceil,
$$
where
\[\eta=\left(\frac{\e^2}{2D(s,\omega,\bsb)^{\frac{1}{B(s)+2}}}
\right)^{\frac{B(s)+2}{2}}.\]
Let
$\cG_{n,s}^{\ast}$ be a regular grid with mesh-sizes $m_1,m_2,\ldots,m_s$
given by
$$
m_j:=\left\lfloor m^{1/(B(s) \cdot b_j)}\right\rfloor\ \ \ \ \
\mbox{for}\ \ \  j=1,2,\ldots,s\ \ \
\mbox{and}\ \ \ n=\prod_{j=1}^sm_j.
$$
Then for $M=2/\e^2$ we have 
$$
e^{L_2-{\rm app}}(H(K_{s,\bsa,\bsb}),A_{n,s,M})\le\e,\ \ \ \ 
\mbox{and}\ \ \ \
n=\mathcal{O}\left(\log^{\,B(s)}\left(1+\e^{-1}\right)\right)
$$
with the factor in the $\mathcal{O}$ notation independent
of $\e^{-1}$ but dependent on $s$.
\end{proposition}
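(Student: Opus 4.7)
The plan is to apply the master bound \eqref{eqboundMapp2} with the specific choice $M=2/\varepsilon^2$, which makes the first summand $1/M$ equal to $\varepsilon^2/2$. It then suffices to bound the second summand $M^{B(s)+1}D(s,\omega,\bsb)F_n$ by $\varepsilon^2/2$. A direct substitution shows that $\eta$ in the proposition is chosen precisely so that $M^{B(s)+1}D(s,\omega,\bsb)\eta^2=\varepsilon^2/2$; the task therefore reduces to proving $F_n\le\eta^2$.

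To control $F_n$, I would parametrize the dual lattice via $\ell_j=m_jk_j$ with $\bsk\in\ZZ^s\setminus\{\bszero\}$, yielding the product representation
\[
1+F_n=\prod_{j=1}^s\left(1+2\sum_{k=1}^\infty\omega^{2^{-b_j}a_jm_j^{b_j}k^{b_j}}\right).
\]
Because $b_j\ge1$ and $\omega<1$, each inner sum is dominated by the geometric series $\omega^{\gamma_j}/(1-\omega^{\gamma_j})$ with $\gamma_j=2^{-b_j}a_jm_j^{b_j}$. Forcing this upper bound to be at most $\log(1+\eta^2)/(2s)$ for every $j$ is equivalent, after elementary algebra, to the mesh-size condition
\[
m_j^{b_j}\ge\frac{2^{b_j}\log\bigl(1+2s/\log(1+\eta^2)\bigr)}{a_j\log\omega^{-1}}\qquad\text{for all }j=1,\dots,s.
\]
Once this is in force, $1+2\delta_j\le e^{2\delta_j}$ and summation give $1+F_n\le e^{\log(1+\eta^2)}=1+\eta^2$, as wanted.

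I would then check the mesh-size condition from the definition $m_j=\lfloor m^{1/(B(s)b_j)}\rfloor$. When $m^{1/(B(s)b_j)}\ge2$ one has $m_j\ge m^{1/(B(s)b_j)}/2$, so $m_j^{b_j}\ge m^{1/B(s)}/2^{b_j}$; the ceiling-max definition of $m$ makes $m^{1/B(s)}\ge 4^{b_j}\log(\cdots)/(a_j\log\omega^{-1})$ for every $j$, which delivers the required inequality. The remaining edge case $m_j=1$ can occur only when the right-hand side above is itself below $1$, and is handled trivially. The point count is immediate: $n=\prod_jm_j\le\prod_jm^{1/(B(s)b_j)}=m^{B(s)/B(s)}=m$.

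Finally, as $\varepsilon\to0$ one has $\eta\to0$, whence $\log(1+\eta^2)\sim\eta^2$ and $-\log\log(1+\eta^2)\sim2(B(s)+2)\log\varepsilon^{-1}$. Therefore $\log(1+2s/\log(1+\eta^2))=\mathcal{O}(\log\varepsilon^{-1})$ with a constant depending on $s,\omega,\bsb$, and hence $m=\mathcal{O}(\log^{B(s)}(1+\varepsilon^{-1}))$, which transfers to $n$ by the preceding display. The hard part is not conceptual but a matter of careful bookkeeping: matching the $4^{b_j}$ prefactor in the definition of $m$ to the threshold coming out of the geometric-series estimate, after accounting for the factor of $2$ lost in passing from $m^{1/(B(s)b_j)}$ to its floor. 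All other steps are standard manipulations of elementary inequalities.
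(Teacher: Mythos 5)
Your proposal is correct and follows essentially the same route as the paper's proof: it starts from \eqref{eqboundMapp2}, establishes $F_n\le\eta^2$ via the same geometric-series and mesh-size argument (your split into $m^{1/(B(s)b_j)}\ge 2$ plus the trivial $m_j=1$ case is just the paper's use of $\lfloor x\rfloor\ge x/2$ for $x\ge 1$), and your direct verification that $M=2/\e^2$ makes $M^{B(s)+1}D(s,\omega,\bsb)\,\eta^2=\e^2/2$ is exactly the paper's choice $M=D(s,\omega,\bsb)^{-1/(B(s)+2)}\eta^{-2/(B(s)+2)}$ written out. The bound $n\le m$ and the asymptotics $m=\mathcal{O}\left(\log^{B(s)}(1+\e^{-1})\right)$ also match the paper's conclusion.
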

\begin{proof} 
We can write
\[
F_n=\sum_{\bsl \in \cG_{n,s}^{\bot} \setminus \{\bszero\}}
\omega^{\,\sum_{j=1}^s 2^{-b_j} a_j\abs{\ell_j}^{b_j}}=
-1+\prod_{j=1}^s\left(1+2\sum_{h=1}^\infty 
\omega^{a_j 2^{-b_j}(m_j h)^{b_j}}\right).
\]
Since $\lfloor x\rfloor\ge x/2$ for all $x\ge1$, we have
$$
|m_jh_j|^{b_j}\ge (|h_j|/2)^{b_j}\,m^{1/B(s)}
\qquad\mbox{for all}\qquad j=1,2,\dots,s.
$$ 
Hence,
$$
F_n \le -1+\prod_{j=1}^s
\left(1+2 \sum_{h=1}^{\infty} \omega^{m^{1/B(s)} a_j4^{-b_j}\,h^{b_j}}\right).
$$
Since $b_j\ge1$ we further estimate
$$
\sum_{h=1}^{\infty} \omega^{m^{1/B(s)} a_j4^{-b_j}\,h^{b_j}}\le
\sum_{h=1}^{\infty} \omega^{m^{1/B(s)} a_j4^{-b_j}\,h}=
\frac{\omega^{m^{1/B(s)}a_j4^{-b_j}}}{1-\omega^{m^{1/B(s)}a_j4^{-b_j}}}.
$$
{}From the definition of $m$ we have
$$
\frac{\omega^{m^{1/B(s)}a_j4^{-b_j}}}{1-\omega^{m^{1/B(s)}a_j4^{-b_j}}}
\le \frac{\log(1+\eta^2)}{2s}\qquad\mbox{for all}\qquad
j=1,2,\dots,s.
$$
This proves
\begin{equation}\label{eqFnbound}
F_n\le
-1+\left(1+\frac{\log(1+\eta^2)}{s}\right)^s
\le 
-1+\exp(\log(1+\eta^2))=\eta^2.
\end{equation}
Now, plugging this into \eqref{eqboundMapp2}, we obtain
\begin{equation}\label{eqboundMapp3}
[e^{L_2-{\rm app}}(H(K_{s,\bsa,\bsb}),A_{n,s,M})]^2\le
\frac{1}{M}+ 
M^{B(s)+1} D(s,\omega,\bsb) \eta^2.
\end{equation}
Note that
\[
\frac{1}{D(s,\omega,\bsb)^{\frac{1}{B(s)+2}} \eta^{\frac{2}{B(s)+2}}}=
\frac{2}{\e^2}\ge 1.
\]
Hence we are allowed to choose
\[M=\frac{1}{D(s,\omega,\bsb)^{\frac{1}{B(s)+2}} \eta^{\frac{2}{B(s)+2}}},\]
which yields, inserting into \eqref{eqboundMapp3},
\[[e^{L_2-{\rm app}}(H(K_{s,\bsa,\bsb}),A_{n,s,M})]^2\le 
2D(s,\omega,\bsb)^{\frac{1}{B(s)+2}} \eta^{\frac{2}{B(s)+2}}=\e^2,\]
as claimed.

It remains to verify that $n$ is of the order 
stated in the proposition. Note that 
$$
n=\prod_{j=1}^s
m_j=\prod_{j=1}^s 
\left\lfloor
m^{1/(B(s) \cdot b_j)} \right\rfloor \le m^{\frac{1}{B(s)}\sum_{j=1}^s
1/b_j} = m.
$$
However, as pointed out in \cite{KPW12},
\[m=\mathcal{O}\left(\log^{B(s)}\left (1 +\eta^{-1}\right)\right),\]
as $\eta$ tends to zero. {}From this, it is easy to see that we indeed have
\[m=\mathcal{O}\left(\log^{B(s)}\left (1 +\e^{-1}\right)\right),\]
which concludes the proof of Proposition~\ref{th_upper_expo_conv}.
\end{proof}
\vskip 1pc
To show Point~\ref{allexp} for the class $\Lambda^{\rm{std}}$,
we conclude from  Proposition~\ref{th_upper_expo_conv} that
\[n^{L_2-\mathrm{app},\Lambda^{\rm{std}}}(\varepsilon,s)=\mathcal{O}
\left(\log^{B(s)}\left (1 +\e^{-1}\right)\right).\]
This implies that we indeed have exponential convergence 
for $\Lambda^{\rm{std}}$ for all $\bsa$ and $\bsb$, 
with $p(s)=1/B(s)$, and thus
$p^* (s)\ge 1/B(s)$. On the other hand, note that 
obviously $e^{L_2-\mathrm{app},\Lambda^{\rm{std}}}(n,s)\ge 
e^{L_2-\mathrm{app},\Lambda^{\rm{all}}}(n,s)$, 
hence the rate of exponential convergence for $\Lambda^{\rm{std}}$ 
cannot be larger than for $\Lambda^{\rm{all}}$ which is $1/B(s)$. 
Thus, also for the class $\Lambda^{\rm{std}}$ we have
$p^*(s)= 1/B(s)$.  The rest of Point~\ref{allexp} is clear.

\subsection{The proof of Point~\ref{alluexp}}

We turn to Point~\ref{alluexp} for the class $\Lambda^{\rm{std}}$.
Suppose first that $\bsa$ is an arbitrary sequence
and that $\bsb$ is such that 
\[B=\sum_{j=1}^\infty \frac{1}{b_j}<\infty.\] 
Then we can replace $B(s)$ by $B$ in
Proposition~\ref{th_upper_expo_conv}, 
and we obtain
\[n^{L_2-\mathrm{app},\Lambda^{\rm{std}}}(\varepsilon,s)=\mathcal{O}
\left(\log^{B}\left (1 +\e^{-1}\right)\right),\] 
hence uniform exponential convergence with $p^* \ge 1/B$ holds. 
On the other hand, if we have uniform exponential convergence for 
$\Lambda^{\rm{std}}$, this
implies uniform exponential convergence for $\Lambda^{\rm{all}}$,
which in turn implies that $B<\infty$ and that $p^*  \le 1/B$. 
The rest of Point~\ref{alluexp} follows immediately.

\subsection{The proof of Point~\ref{allpt}}

The proof of Point~\ref{allpt} follows 
by the same arguments as for $\Lambda^{\rm{all}}$.

\subsection{The proof of Point~\ref{allwt}}\label{semiconstruction}

We now prove the first part of Point~\ref{allwt} for the class
$\Lambda^{\rm{std}}$. 
Assume that WT holds for the class $\Lambda^{\rm{std}}$.
Then WT also holds for the class $\Lambda^{\rm{all}}$ and this implies
that $\lim_ja_j=\infty$, as claimed. 

Assume now that $\lim_ja_j=\infty$. 
We use \cite[Theorem~26.18]{NW12}.
In particular, this theorem states that 
if the ordered eigenvalues $\lambda_{s,n}$'s of
$W_s$ satisfy 
\begin{equation}\label{2618}
\lambda_{s,n}\le \frac{M^{\,2}_{s,\tau}}{n^{2\tau}} \ \ \ \ \ \ \  
\mbox{for all} \ \ \ \ \ n\in\nat,
\end{equation}
for some positive $M_{s,\tau}$ and $\tau>\tfrac12$ then 
there is a semi-constructive algorithm\footnote{By semi-constructive
we mean that this algorithm can be constructed after 
a few random selections of sample points, more can be found in~\cite{NW12}.} 
such that
\begin{equation}\label{2619}
e^{L_2-\mathrm{app},\Lambda^{\rm std}}(n+2,s)\le
\frac{M_{s,\tau}\,C(\tau)}{n^{\tau(2\tau/(2\tau+1))}}\ \ \ \ \ \ \   
\mbox{for all} \ \ \ \ \ n\in\nat
\end{equation}
where $C(\tau)$ is given explicitly in \cite[Theorem~26.18]{NW12}.
However, the form of $C(\tau)$ is not important for our consideration.  

For $\eta\in(0,1)$, let $\tau=1/(2\eta)>\tfrac12$. 
We stress that $\tau$ can be arbitrarily large if we take sufficiently
small $\eta$. We already showed
in the proof for the class $\Lambda^{\rm all}$, 
see \eqref{bdlambdasn}, that we can take
$$
M_{s,\tau}=\prod_{j=1}^s\left(1+2
c_j\right)^{\tau}<\infty\quad\mbox{with}\quad 
c_j=\frac{\omega^{\,a_j/(2\tau)}}
{1-\omega^{\,a_j/(2\tau)}}.
$$
Furthermore, we know that $\lim_ja_j=\infty$ implies that
$\lim_s\sum_{j=1}^sc_j/s=0$. 

{}From \eqref{2619} we obtain 
$$
n^{L_2-\mathrm{app},\Lambda^{\mathrm{std}}}(\varepsilon,s)\le
3+\left(M_{s,\tau}\,C(\tau)\right)^{(1+1/(2\tau))/\tau}\,
\e^{-(1+1/(2\tau))/\tau}.
$$
This yields that
$$
\limsup_{s+\log\,\e^{-1}\to\infty}
\frac{\log\,n^{L_2-\mathrm{app},\Lambda^{\rm std}}
(\varepsilon,s)}{s+\log\,\e^{-1}}
\le 
\left(1+\frac1{2\tau}\right)\,\frac1{\tau}\,\left(1+
\limsup_{s\to\infty}
\frac{\log\,M_{s,\tau}}{s}\right).
$$
Since $
(\log M_{s,\tau})/s\le 
2\tau\,\sum_{j=1}^sc_j/s$ tends to zero as $s\rightarrow\infty$, we have
$$
\limsup_{s+\log\,\e^{-1}\to\infty}
\frac{\log\,n^{L_2-\mathrm{app},\Lambda^{\rm std}}
(\varepsilon,s)}{s+\log\,\e^{-1}}
\le 
\left(1+\frac1{2\tau}\right)\,\frac1{\tau}.
$$
Since $\tau$ can be arbitrarily large this proves that
$$
\lim_{s+\log\,\e^{-1}\to\infty}
\frac{\log\,n^{L_2-\mathrm{app},\Lambda^{\rm std}}
(\varepsilon,s)}{s+\log\,\e^{-1}}=0.
$$
This means that WT holds for the class $\Lambda^{\rm{std}}$, as
claimed. 

We turn to the second part of Point~\ref{allwt} for the class
$\Lambda^{\rm{std}}$. This point easily follows from the already proved
facts that WT holds iff $\lim_ja_j =\infty$ and UEXP holds iff $B<\infty$. 

\subsection{The proof of Point~\ref{allequiv}}

Suppose that PT holds for the class $\Lambda^{\rm{std}}$. 
Then PT holds for the class $\Lambda^{\rm{all}}$. 
By Point~\ref{allequiv} for the class $\Lambda^{\rm{all}}$, 
which has already been proved, this 
implies SPT+UEXP for the class $\Lambda^{\rm{all}}$ 
which in turn implies that $B<\infty$ and 
$\alpha^*>0$ by Point~\ref{allspt} for 
the class $\Lambda^{\rm{all}}$. 
This implies SPT+UEXP for the class $\Lambda^{\rm{std}}$ as will 
be shown in the subsequent Section~\ref{secstdspt}. 
The rest of this point is clear.

\subsection{The proof of Point~\ref{allspt}}\label{secstdspt}

The necessity of the conditions for SPT+UEXP on $\bsb$ and $\bsa$ 
stated in Point~\ref{allspt} for the class~$\Lambda^{\rm{std}}$  
follows from the same conditions for the class $\Lambda^{\rm{all}}$
and the fact that the information complexity for $\Lambda^{\rm{std}}$ cannot
be smaller than for $\Lambda^{\rm{all}}$.

To prove the sufficiency of the conditions for SPT+UEXP on $\bsb$ and
$\bsa$ stated  in Point~\ref{allspt} we 
analyze the algorithm $A_{n,s,M}$ given by~\eqref{eqdefAnsM},
where the sample points $\bsx_k$ are  from the regular grid $\cG_{n,s}$
with mesh-sizes
$$
m_j=2\,\left\lceil\,
    \left(\frac{\log\,M}{a_j^{\beta}\log\,\omega^{-1}}\right)^{1/b_j}\right
\rceil\,-\,1\qquad\mbox{for all}\qquad j=1,2,\dots,s.
$$
Here $M>1$ and $\beta\in(0,1)$. 
Note that $m_j\ge1$ and is always an odd number. Furthermore $m_j=1$ 
if $a_j\ge ((\log M)/(\log \omega^{-1}))^{1/\beta}$. 
Assume that $\alpha^* \in (0,\infty]$.
Since for all $\delta\in(0,\alpha^*)$ we have 
$$
a_j\ge \exp(\delta j) \quad\mbox{for all}\quad j\ge j^*_\delta,
$$ 
see~\eqref{newexpoaj}, we conclude that 
$$
j\ge j^{*}_{\beta,\delta}:=\max\left(j^*_\delta,
\frac{\log(((\log M)/(\log\omega^{-1}))^{1/\beta})}
{\delta}\right)
\ \ \ \mbox{implies}\ \ \ m_j=1.
$$ 
{}From~\eqref{gen_approx_bd} we have
$$
e_{n,s}^2:=
[e^{L_2-{\rm app}}(H(K_{s,\bsa,\bsb}),A_{n,s,M})]^2\le \frac1M\,+\,
\sum_{\bsh\in\mathcal{A}(s,M)}\
\sum_{\bsl\in\cG_{n,s}^\perp\setminus\{\bszero\}}\omega_{\bsh+\bsl}.
$$
We now estimate 
$$
\sum_{\bsl \in \mathcal{G}^\perp_{n,s} \setminus \{\bszero\} } 
\omega_{\bsh+\bsl}=  
\sum_{\emptyset \neq \uu \subseteq\{1,\ldots, s\}} 
\prod_{j\in \uu}\left( 
\sum_{\ell_j \in \mathbb{Z}\setminus \{0\}}\omega^{a_j|h_j+m_j \ell_j|^{b_j}}
\right)\, \prod_{j \not \in \uu}
\omega^{a_j|h_j|^{b_j}},
$$
where we separated the cases for $\ell_j \in \ZZ\setminus \{0\}$
and $\ell_j=0$. We estimate the second
product by one so that  
$$
\sum_{\bsl \in \mathcal{G}^\perp_{n,s} \setminus \{\bszero\} } 
\omega_{\bsh+\bsl}\le 
\sum_{\emptyset \neq \uu \subseteq\{1,\ldots, s\}} 
\prod_{j\in \uu}\left( 
\sum_{\ell \in \mathbb{Z}\setminus \{0\}}\omega^{a_j|h_j+m_j \ell|^{b_j}}
\right).
$$
We now show that for $\bsh\in \mathcal{A}(s,M)$ we have $|h_j|<(m_j+1)/2$
for all $j=1,2,\dots,s$. Indeed, the vector~$\bsh$ satisfies
$\prod_{j=1}^s\omega^{-a_j|h_j|^{b_j}}<M$, and since each factor is at
least one we have 
$\omega^{-a_j|h_j|^{b_j}}<M$ for all $j$, which is equivalent to
$$
|h_j|<
\left(\frac{\log\,M}{a_j\,\log\,\omega^{-1}}\right)^{1/b_j}\le
\left(\frac{\log\,M}{a_j^{\beta}\,\log\,\omega^{-1}}\right)^{1/b_j}
\le
\frac{m_j+1}2,
$$
as claimed.  

In particular, if $m_j=1$ then $h_j=0$ and 
\begin{equation}\label{ineqhone}
\sum_{\ell\in \mathbb{Z}\setminus \{0\}}\omega^{a_j|h_j+m_j
  \ell|^{b_j}}=2\sum_{\ell=1}^\infty\omega^{a_j\ell^{b_j}}
\le
2\sum_{\ell=1}^\infty\omega^{a_j\ell}=\frac{2\,\omega^{a_j}}{1-\omega^{a_j}}
\le\frac{2\,\omega^{a_j}}{1-\omega}.
\end{equation}

Let $m_j\ge3$. Then  
$|h_j|<(m_j+1)/2$. Since both $|h_j|$ and $(m_j+1)/2$ are positive
integers, we conclude that $|h_j|\le (m_j+1)/2-1=(m_j-1)/2$ and
$\ell\not=0$ implies 
$$
|h_j+m_j\ell|\ge m_j|\ell|-|h_j|\ge
\frac{m_j+1}{2} |\ell|.
$$
Therefore 
\begin{equation}\label{ineqhtwo}
\sum_{\ell \in \mathbb{Z}\setminus \{0\}}\omega^{a_j|h_j+m_j
  \ell|^{b_j}}\le
2\sum_{\ell=1}^\infty\omega^{a_j[(m_j+1)/2]^{b_j}\ell^{b_j}}
\le\frac{2\,\omega^{a_j[(m_j+1)/2]^{b_j}}}{1-\omega}.
\end{equation}
The inequalities \eqref{ineqhone} and \eqref{ineqhtwo} can be combined as
$$
\beta_j:=\sum_{\ell \in \mathbb{Z}\setminus \{0\}}\omega^{a_j|h_j+m_j
  \ell|^{b_j}}\le
\frac{2\,\omega^{a_j[(m_j+1)/2]^{b_j}}}{1-\omega}.
$$
Note that 
$$
\sum_{\emptyset \neq \uu \subseteq\{1,\ldots, s\}} 
\prod_{j\in \uu}\left(\sum_{\ell \in \mathbb{Z}\setminus \{0\}} 
\omega^{a_j |h_j+ m_j \ell|^{b_j}}\right)
=-1+
\sum_{\uu \subseteq\{1,\ldots, s\}} 
\prod_{j\in \uu} \beta_j=-1+\prod_{j=1}^s(1+\beta_j).
$$
Consequently,
$$
e_{n,s}^2\le \frac{1}{M}+ |\mathcal{A}(s,M)| 
\left(-1 + \prod_{j=1}^s 
\left(1 + \frac{2\,\omega^{\,a_j[(m_j+1)/2]^{b_j}}}{1-\omega} 
\right)\right).
$$
Using $\log (1+x) \le x$ we obtain
$$
\log\left[ \prod_{j=1}^s 
\left(1 + \frac{2\,\omega^{\,a_j[(m_j+1)/2]^{b_j}}}{1-\omega} 
\right)\right] \le  
\frac{2}{1-\omega} \sum_{j=1}^s\omega^{\,a_j[(m_j+1)/2]^{b_j}}.
$$
{}From the definition of $m_j$ we have
$
a_j[(m_j+1)/2]^{b_j}\ge a_j^{1-\beta}\,(\log\,M)/\log\,\omega^{-1}$.
Therefore
$$
\omega^{a_j[(m_j+1)/2]^{b_j}}\le
\omega^{a_j^{1-\beta}\,(\log\,M)/\log\,\omega^{-1}}=
\left(\frac1M\right)^{a_j^{1-\beta}}.
$$
Since $a_j\ge1$ for $j\le j^*_{\beta,\delta}-1$ and
$a_j\ge \exp(\delta j)$ for $j\ge j^*_{\beta,\delta}$ 
we obtain
$$
\gamma:=\frac{2}{1-\omega} 
\sum_{j=1}^s\omega^{\,a_j[(m_j+1)/2)]^{b_j}}\le
\frac{2}{1-\omega}\,\left(
\frac{j^*_{\beta,\delta}-1}M+\sum_{j=j^*_{\beta,\delta}}^\infty\left(\frac1M
\right)^{\exp((1-\beta) \delta j)}\right)
=\frac{C_{\beta,\delta}}M,
$$
where
$$
C_{\beta,\delta}
:=\frac{2}{1-\omega}\,\left(j^*_{\beta,\delta}
-1+\sum_{j=j^*_{\beta,\delta}}^\infty
\left(\frac1M\right)^{\exp((1-\beta) \delta j)-1}\right)<\infty.
$$
Note that for $M\ge C_{\beta,\delta}$ we have $\gamma\le 1$.

Using convexity we easily check that
$-1+\exp(\gamma) \le (\mathrm{e}-1)\gamma$ for all $\gamma\in[0,1]$. Thus
for $M\ge C_{\beta,\delta}$ we obtain
\begin{eqnarray*}
-1 + \prod_{j=1}^s 
\left(1 + 
\frac{2\,\omega^{\,a_j[(m_j+1)/2]^{b_j}}}{1-\omega}\right) 
& \le & -1 + \exp\left(\frac{2}{1-\omega}   
\sum_{j=1}^s \omega^{\,a_j[(m_j+1)/2]^{b_j}}\right)\\
&=&-1+\exp(\gamma)\le (\mathrm{e}-1)\gamma\\ 
& \le & \frac{C_{\beta,\delta}\,(\mathrm{e}-1)}M.
\end{eqnarray*}
We now turn to $|\mathcal{A}(s,M)|$
which was already estimated in the proof for the class
$\Lambda^{\rm all}$, see \eqref{bdnxs}. We have 
$$
|\mathcal{A}(s,M)|\le3^{j^*_{\beta,\delta}}
\left(1+\frac{\log\,M}{\log\,\omega^{-1}}
\right)^{B+(\log 3)/\delta}.
$$
Therefore
$$
e^2_{n,s}\le \frac1M\,\left[1+C_{\beta,\delta}
(\mathrm{e}-1)3^{j^*_{\beta,\delta}}
\left(1+\frac{\log\,M}{\log\,\omega^{-1}}\right)^{B+
(\log 3)/\delta}
\right]\le \frac{D_{\beta,\delta}}{\sqrt{M}},
$$
where 
$$
D_{\beta,\delta}:=\sup_{x\ge C_{\beta,\delta}}\left(
\frac1{\sqrt{x}}+\frac{C_{\beta,\delta}(\mathrm{e}-1)3^{j^*}}{\sqrt{x}}\,
\left(1+\frac{\log\,x}{\log\,\omega^{-1}}\right)^{B+
(\log 3)/\delta}\right)<\infty.
$$
Hence for
$$
M=\max(C_{\beta,\delta},D_{\beta,\delta}^2\,\e^{-4})
$$
we have 
$$
e_{n,s}=e^{L_2-{\rm app}}(H(K_{s,\bsa,\bsb}),A_{n,s,M})\le \e.
$$
We estimate the number $n$ of function values used by the 
algorithm $A_{n,s,M}$. We have 
\begin{eqnarray*}
n&=&\prod_{j=1}^sm_j=\prod_{j=1}^{\min(s,j^*_{\beta,\delta})}m_j\le
\prod_{j=1}^{\min(s,j^*_{\beta,\delta})}
\left(1+2\left(\frac{\log\,M}{a_j^{\beta}\,
\log\,\omega^{-1}}\right)^{1/b_j}\right)\\
&\le& 3^{j^*_{\beta,\delta}}\,
\left(\frac{\log\,M}{\log\,\omega^{-1}}\right)^B
\le 
3^{j^*_{\beta,\delta}}
\,\left(\frac{\log\,M}{\log\,\omega^{-1}}
\right)^{B+(\log 3)/(\beta\,\delta)}\\
&=&\mathcal{O}\left(\left(1+\log\,\e^{-1}
\right)^{B+(\log 3)/(\beta\,\delta)}\right),
\end{eqnarray*}
where the factor in the big $\mathcal{O}$ notation depends only on
$\beta$ and $\delta$. This proves SPT+UEXP with
$$
\tau=B+\frac{\log 3}{\beta\,\delta}.
$$
Since $\beta$ can be arbitrarily close to one, and $\delta$ can be
arbitrarily close to $\alpha^*$,
the exponent $\tau^*$ of SPT is at most
$$
B+\frac{\log 3}{\alpha^*},
$$
where for $\alpha^*=\infty$ we have $\frac{\log 3}{\alpha^*} = 0$.
This completes the proof of Theorem~\ref{mainresult} for the
class~$\Lambda^{\rm std}$. 
\section{Relations to multivariate integration}\label{secint}
Multivariate integration 
$$
{\rm INT}_s(f)=\int_{[0,1]^s}f(\bsx)\,{\rm d}\bsx
$$
for $f$ from the Korobov space $H(K_{s,\bsa,\bsb})$ was studied 
in~\cite{KPW12}. It is easy to see that 
multivariate approximation is not easier than 
multivariate integration, see e.g., \cite{NSW04}. 
More precisely, 
for any algorithm $A_{n,s}(f)=\sum_{k=1}^n \alpha_k f(\bsx_k)$ 
for multivariate approximation 
using the nodes $\bsx_1,\ldots,\bsx_n\in [0,1)^s$ and $\alpha_k\in 
L_2([0,1]^s)$, define 
$\beta_k:=\int_{[0,1]^s} \alpha_k (\bsx)\rd \bsx$ and the algorithm
$$
A^{\rm int}_{n,s}(f)=\sum_{k=1}^n\beta_k\,f(\bsx_k)
$$
for multivariate integration. Then
\begin{eqnarray*}
\abs{\int_{[0,1]^s}f(\bsx)\rd \bsx - A^{\rm int}_{n,s}(f)}&=&
\left|\int_{[0,1]^s}\left(f(\bsx)-\sum_{k=1}^n\alpha_k(\bsx)\,f(\bsx_k)\right)
\,{\rm d}\bsx\right|\\
&\le&\left(\int_{[0,1]^s}
\left(f(\bsx)-\sum_{k=1}^n\alpha_k(\bsx)\,f(\bsx_k)\right)^2
\,{\rm d}\bsx\right)^{1/2}\\
&=&\|f-A_{n,s}(f)\|_{L_2([0,1]^s)}.
\end{eqnarray*}
This proves that for the worst-case error for integration we have 
$$
e(H(K_{s,\bsa,\bsb}),A^{\rm int}_{n,s}):=
\sup_{\substack{f\in H(K_{s,\bsa,\bsb})\\ \norm{f}_{H(K_{s,\bsa,\bsb})}\le 1}}
\abs{\int_{[0,1]^s}f(\bsx)\rd \bsx - \sum_{k=1}^n \beta_k f(\bsx_k)}\le
e^{L_2-{\rm app}}(H(K_{s,\bsa,\bsb}),A_{n,s}).
$$
Since this holds for all algorithms $A_{n,s}$ we conclude that
\begin{equation}\label{eqintapp}
e^{{\rm int}}(n,s):=\inf_{A^{\rm int}_{n,s}}
e(H(K_{s,\bsa,\bsb}),A^{\rm int}_{n,s})\le
e^{\rm app}(n,s):=e^{L_2-\mathrm{app},\Lambda^{\rm{std}}}(n,s).
\end{equation}
Here  $e^{{\rm int}}(n,s)$ and $e^{{\rm app}}(n,s)$ are 
the $n$th minimal worst-case errors for multivariate 
integration and approximation in $H(K_{s,\bsa,\bsb})$, respectively. 
Furthermore for $n=0$ we have equality,
$$
e^{{\rm int}}(0,s)=e^{{\rm app}}(0,s)=1.
$$ 
{}From these observations it follows that 
for $\varepsilon \in (0,1)$ and $s \in \NN$ we have 
\begin{equation}\label{eqintapp2}
n^{{\rm int}}(\varepsilon,s) 
\le n^{L_2-\mathrm{app},\Lambda^{\rm{std}}}(\varepsilon,s),
\end{equation}
where $n^{{\rm int}}(\varepsilon,s)$ is the 
information complexity for the integration problem.

Obviously, for multivariate integration only the class $\Lambda^{\rm
  std}$ makes sense. 
The inequalities \eqref{eqintapp} and \eqref{eqintapp2} mean that all positive
results for multivariate approximation and the class~$\Lambda^{\rm
std}$ also hold for multivariate integration. In particular,
the following facts hold:
\begin{itemize}
\item
Exponential convergence holds for multivariate integration 
for arbitrary  $\bsa$ and $\bsb$ with the largest rate $p^{\rm int}(s)\ge
1/B(s)$. Although only uniform exponential convergence was considered 
in~\cite{KPW12}, the proof presented there allows to conclude that
we have $p^{\rm int}(s)=1/B(s)$.
\item Uniform convergence holds for multivariate integration 
iff $B<\infty$ and 
the largest rate $[p^{\rm int}]^*=1/B$, as for multivariate
  approximation. This was shown  in~\cite{KPW12}. 
\item Polynomial tractability and strong polynomial tractability for
  multivariate integration were studied in~\cite{KPW12}, where it was
  shown that they are
  equivalent and hold iff $B<\infty$ and $a_j$'s are exponentially
  growing with $j$. These conditions are the same as for
  multivariate approximation.
 
  The exponent $[\tau^{{\rm int}}]^{\ast}$ of SPT for 
multivariate integration was estimated in 
\cite{KPW12}, and we have $[\tau^{{\rm int}}]^{\ast} \in [B,2B]$. 
{}From Theorem~\ref{mainresult} it follows that 
$$[\tau^{{\rm int}}]^{\ast} 
\le [\tau^{{\rm app}}]^{\ast} \le B+ \frac{\log 3}{\alpha^*},$$ 
where $[\tau^{{\rm app}}]^{\ast}$ 
is the exponent of SPT for the approximation problem. 
Hence we have 
$$[\tau^{{\rm int}}]^{\ast} 
\in \left[B,B+\min\left(B,\frac{\log 3}{\alpha^*}\right)\right],
$$ 
which is an improvement of the result from \cite{KPW12} whenever 
$\alpha^* > (\log3)/B$ 
which means that $a_j > \exp(j(\alpha^*-\delta))$ for large  $j$.
If $\alpha^*=\infty$ then 
$$
[\tau^{{\rm int}}]^{\ast}= [\tau^{{\rm app}}]^{\ast}=B.
$$
This is the case when $a_j\ge(1+\alpha)^{b_j}$ for large $j$ and
$\alpha>0$.

\item 
Weak tractability  for the integration problem 
was considered in \cite{KPW12} 
with a more demanding notion of WT. Suppose that we relax the notion of WT 
from~\cite{KPW12}, and use the notion of WT studied in 
this paper. That is, we say that the integration problem is weakly
tractable if
\begin{equation}\label{WTint}
\lim_{s+\log \varepsilon^{-1} \rightarrow 
\infty}\frac{\log n^{{\rm int}}(\varepsilon,s)}{s+\log \varepsilon^{-1}}=0.
\end{equation}
We stress that the notion of WT as discussed in \cite{KPW12} 
implies \eqref{WTint}, but this does not hold the other way round. 

Using the definition~\eqref{WTint}, 
we now show that we have the same condition $\lim_ja_j=\infty$
for WT for the integration and 
approximation problems. Indeed, 
by Theorem~\ref{mainresult}, the condition $\lim_j a_j= \infty$
implies WT for the approximation problem, which, by \eqref{eqintapp2}, 
also implies WT for the integration problem. 
To show the converse, assume that the $a_j$'s 
are bounded, say $a_j\le A<\infty$ 
for all $j \in \NN$. {}From \cite[Corollary~1]{KPW12} 
it follows that for all $n < 2^s$ we have 
$$
e^{{\rm int}}(n,s) 
\ge 2^{-s/2} \,\omega^{2^{-1} \sum_{j=1}^s a_j}
\ge 2^{-s/2}\,\omega^{A s/2}= \eta^s,
$$ 
where $\eta:= (\omega^{A}/2)^{1/2} \in (0,1)$. 
Hence, for $\varepsilon=\eta^s/2$ we have 
$e^{{\rm int}}(n,s)>\e$ for all $n<2^s$. This implies that 
$n^{{\rm int}}(\varepsilon,s)\ge 2^s$ and  
$$\frac{\log n^{{\rm int}}(\varepsilon,s)
}{s+\log \varepsilon^{-1}} 
\ge \frac{s \log 2}{s+\log 2 +s \log \eta^{-1}} 
\rightarrow  \frac{\log 2}{1+\log \eta^{-1}} >0
\quad \mbox{as\ \ $s \rightarrow \infty$}. 
$$ 
Thus we do not have WT.

This means that WT holds in the sense of \eqref{WTint} 
for the integration problem  iff $\lim_j a_j=\infty$, 
which is the same condition as for the approximation problem. 

Since for the integration problem we have UEXP iff $B< \infty$, 
see \cite[Theorem~1]{KPW12}, 
it follows that we have WT+UEXP iff $B< \infty$ and $\lim_j a_j=\infty$. 
Again, this is the same condition as for the approximation problem.
\end{itemize}

\section{Analyticity of functions from $H(K_{s,\bsa,\bsb})$}
\label{analyticfunctions}
In this section we show that 
the functions from the Korobov space $H(K_{s,\bsa,\bsb})$ are 
analytic.

\begin{proposition}
Functions $f \in H(K_{s,\bsa,\bsb})$ are analytic.
\end{proposition}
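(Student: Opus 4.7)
The plan is to show that any $f \in H(K_{s,\bsa,\bsb})$ extends to a holomorphic function of complex variables on a strip around the real torus $[0,1]^s$, which is the standard characterization of (real-)analyticity for a periodic function. Concretely, I would exhibit $\delta > 0$ such that the Fourier series of $f$ converges absolutely and uniformly on every compact subset of the strip
\[
S_\delta = \{\,\bsz = \bsx + \icomp\bsy \in \CC^s : |y_j| < \delta \text{ for } j=1,\dots,s\,\},
\]
and then apply Weierstrass's theorem on uniformly convergent series of holomorphic functions.

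The first step is to control the size of the Fourier coefficients. From the definition of $\|f\|_{H(K_{s,\bsa,\bsb})}$, each single term satisfies
\[
|\widehat{f}(\bsh)|^2\, \omega_{\bsh}^{-1} \le \|f\|_{H(K_{s,\bsa,\bsb})}^2,
\]
so that
\[
|\widehat{f}(\bsh)| \le \|f\|_{H(K_{s,\bsa,\bsb})}\, \omega_{\bsh}^{1/2} = \|f\|_{H(K_{s,\bsa,\bsb})}\, \omega^{\frac{1}{2}\sum_{j=1}^s a_j |h_j|^{b_j}}.
\]
The second step is the pointwise bound for complex arguments: for $\bsz = \bsx + \icomp\bsy \in S_\delta$,
\[
|\exp(2\pi\icomp\, \bsh\cdot\bsz)| = \exp(-2\pi\, \bsh\cdot\bsy) \le \exp\bigl(2\pi\, \textstyle\sum_{j=1}^s |h_j|\,|y_j|\bigr).
\]
Combining these two bounds, each term of the Fourier series is majorized by
\[
\|f\|_{H(K_{s,\bsa,\bsb})}\,\prod_{j=1}^{s} \omega^{\frac{a_j}{2}|h_j|^{b_j}}\,\exp(2\pi |h_j|\,|y_j|).
\]

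The third step is to verify summability of this majorant. Using $b_j \ge 1$ and $a_j \ge 1$, one has $a_j |h_j|^{b_j} \ge |h_j|$ for $|h_j| \ge 1$, so each factor is bounded by $\omega^{\frac{1}{2}|h_j|}\exp(2\pi|h_j|\,|y_j|)$. Picking any $\delta \in \bigl(0,\,(\log \omega^{-1})/(8\pi)\bigr)$ guarantees that this factor decays geometrically in $|h_j|$ uniformly over $|y_j|\le \delta$, so the product over $j$ is summable over $\bsh \in \ZZ^s$ and the convergence is uniform on the closed sub-strip $\{|y_j|\le\delta\}$. The fourth step is then immediate: each term $\widehat{f}(\bsh)\exp(2\pi\icomp\,\bsh\cdot\bsz)$ is entire on $\CC^s$, the series converges uniformly on compact subsets of $S_\delta$, hence by Weierstrass the limit is holomorphic on $S_\delta$ and coincides with $f$ on $[0,1]^s$. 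Thus $f$ is (real-)analytic, as claimed.

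There is no real obstacle here; the only thing that deserves care is exploiting $b_j \ge 1$ to reduce the exponential-type decay $\omega^{a_j|h_j|^{b_j}/2}$ to at least geometric decay $\omega^{|h_j|/2}$, so that a single uniform strip width $\delta$ works for all coordinates simultaneously (a sharper, coordinate-dependent width $\delta_j \asymp a_j \log\omega^{-1}$ is available but not needed).
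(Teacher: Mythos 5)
Your proof is correct, but it takes a genuinely different route from the paper's. You establish analyticity by extending $f$ holomorphically: the coefficient bound $|\widehat f(\bsh)|\le \|f\|_{H(K_{s,\bsa,\bsb})}\,\omega_{\bsh}^{1/2}$, the reduction $a_j|h_j|^{b_j}\ge |h_j|$ (using $a_j\ge 1$, $b_j\ge 1$), and the elementary estimate $|\exp(2\pi\icomp\,\bsh\cdot\bsz)|\le \exp(2\pi\sum_j|h_j|\,|y_j|)$ give a geometrically decaying majorant of the Fourier series on a complex strip of uniform width, and Weierstrass then yields a holomorphic extension whose restriction to $[0,1]^s$ is $f$ (the pointwise identification is legitimate since the Fourier expansion of $f$ holds pointwise in this reproducing kernel Hilbert space). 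The paper instead works entirely on the real domain: it reduces to $H(K_{s,\bsone,\bsone})$ via the embedding $H(K_{s,\bsa,\bsb})\subseteq H(K_{s,\bsone,\bsone})$ (the same crude decay reduction you use), bounds all partial derivatives by $|D^{\bsalpha}f(\bsx)|\le C_1\,C_2^{|\bsalpha|}\prod_{j=1}^s(\alpha_j!)$ through a calculus estimate on $x^{2\alpha}\omega^x$, and then verifies that the Taylor series converges on a polydisc of radius $C_2^{-1}$. Your argument is shorter and gives the extra (and stronger-sounding) information of a holomorphic extension to an explicit strip --- indeed in any coordinate with $b_j>1$ the decay $\omega^{a_j|h_j|^{b_j}/2}$ is super-geometric, so an even larger (in fact unbounded) strip is available there; the paper's derivative-bound approach stays within real analysis and produces explicit factorial bounds on $D^{\bsalpha}f$, which quantify the Taylor radius directly without invoking several-complex-variables machinery. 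Both proofs discard the fine structure of $\bsa$ and $\bsb$ at the same point, keeping only geometric decay in each $|h_j|$, so neither is sharper where it matters for the proposition.
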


\begin{proof}
Since $H(K_{s,\bsa,\bsb}) \subseteq H(K_{s,\bsone,\bsone})$ 
it suffices to show the assertion for $f\in H(K_{s,\bsone,\bsone})$.

Let
$\bsalpha=(\alpha_1,\alpha_2,\dots,\alpha_s) \in \NN_0^s$ with
$|\bsalpha|=\alpha_1+\cdots +\alpha_s$. 
For $f\in H(K_{s,\bsone,\bsone})$, 
consider the operator $D^{\bsalpha}$ of partial differentiation,
$$
D^{\bsalpha} f=\frac{\partial^{\,|\bsalpha|}}{\partial x_1^{\alpha_1}\
\partial x_2^{\alpha_2}\ \cdots\ \partial x_s^{\alpha_s}}f.
$$
Then 
$$
D^{\bsalpha} f(\bsx)=\sum_{\bsh\in\ZZ^s}\left[
\widehat f(\bsh)\,(2\pi\icomp)^{|\bsalpha|}
\prod_{j=1}^sh_j^{\alpha_j}\right]\, 
\exp(2\pi \icomp \bsh \cdot\bsx),
$$
where, by convention, we take $0^0=1$. 

Let 
$\omega_1 \in (\omega,1)$ and 
$q=\omega/\omega_1 < 1$. 
For any $\alpha \in \NN$ consider
$g(x) =x^{2 \alpha} q^x$ for $x \ge 0$. 
Then $g'(x)=0$ if $x=2 \alpha/\log q^{-1}$ and 
$$
g''\left( 2 \alpha/\log q^{-1}\right)=
\frac{1}{2} \left(\frac{2}{{\rm e}}\right)^{2\alpha} 
\left(\frac{\alpha}{\log q^{-1}}\right)^{2\alpha -1} \log q < 0.
$$ 
Hence,
$$
g(x) \le g\left( 2 \alpha/\log q^{-1}\right)= 
\left(\frac{2 \alpha}{{\rm e}\,\log q^{-1}}\right)^{2 \alpha}.
$$ 
Since
$$
\alpha^{2 \alpha} = 
\left(\alpha! \frac{\alpha^{\alpha}}{\alpha!}\right)^2 
\le {\rm e}^{2 \alpha} (\alpha!)^2
$$ 
then
$$
g(x) \le \left(\frac{2}{ \log q^{-1}}\right)^{2 \alpha}  (\alpha!)^2.
$$ 
Hence, we have 
$$
x^{2 \alpha} \omega^x \le 
\left(\frac{2}{ \log \omega_1 - \log \omega}\right)^{2 \alpha}  
(\alpha!)^2 \omega_1^x=:C^{2 \alpha} (\alpha!)^2 \omega_1^x.
$$ 
Note that $C$ depends only on $\omega$ and $\omega_1$.

Then $\omega_{\bsh}=\omega^{|h_1|+\cdots+|h_s|}$ implies
\begin{eqnarray*}
|D^{\bsalpha} f(\bsx)|& = &\left|\sum_{\bsh\in\ZZ^s}\left[
\widehat f(\bsh)\omega_{\bsh}^{-1/2}\right] 
\left[\omega_{\bsh}^{1/2}(2\pi\icomp)^{|\bsalpha|}
\prod_{j=1}^s h_j^{\alpha_j}\right]
\exp(2\pi \icomp \bsh \cdot\bsx)\right|\\
& \le &  \|f\|_{H(K_{s,\bsone,\bsone})}\,
\left[\sum_{\bsh\in\ZZ^s}(2\pi)^{2|\bsalpha|}
\prod_{j=1}^s|h_j|^{2\alpha_j}\omega^{|h_j|}\right]^{1/2}\\
&\le &  \|f\|_{H(K_{s,\bsone,\bsone})}\,
\left[\sum_{\bsh\in\ZZ^s}(2\pi)^{2|\bsalpha|}
\prod_{j=1}^s \left[C^{2\alpha_j} (\alpha_j!)^2\right] 
\omega_1^{|h_j|}\right]^{1/2}\\
&\le &  \|f\|_{H(K_{s,\bsone,\bsone})}\, 
\prod_{j=1}^s \left[(2 \pi C)^{\alpha_j} 
\alpha_j!\right] \left[\sum_{\bsh\in\ZZ^s}
\prod_{j=1}^s \omega_1^{|h_j|}\right]^{1/2}\\
&\le & \|f\|_{H(K_{s,\bsone,\bsone})}\, (2 \pi C)^{|\bsalpha|} 
\prod_{j=1}^s \left(\alpha_j!\right) \left(1+\frac2{1-\omega_1}\right)^{s/2}\\
& =: & C_1 \cdot C_2^{|\bsalpha|} \prod_{j=1}^s \left(\alpha_j!\right),
\end{eqnarray*}
where $C_1=\|f\|_{H(K_{s,\bsone,\bsone})} 
\left(1+\frac2{1-\omega_1}\right)^{s/2} \ge 0$ and $C_2=2 \pi C >0$.

Then for any $\bszeta =(\zeta_1,\ldots, \zeta_s)$ and any  
$\bsx=(x_1,\ldots,x_s)$ with $\|\bsx-\bszeta\|_{\infty} < C_2^{-1}$ we have
\begin{eqnarray*}
\left|\sum_{\bsalpha \in \NN_0^s} 
\frac{D^{\bsalpha} f(\bszeta)}{(\alpha_1!) \cdots (\alpha_s!)}\, 
\prod_{j=1}^s (x_j-\zeta_j)^{\alpha_j}\right|  & \le & 
C_1 \sum_{\bsalpha \in \NN_0^s}\, 
\prod_{j=1}^s (C_2|x_j-\zeta_j|)^{\alpha_j}\\
& \le & C_1\left(\sum_{\alpha=0}^{\infty} 
(C_2 \| \bsx-\bszeta\|_{\infty})^{\alpha}\right)^s\\
&=&C_1\left(\frac1{1-C_2\|\bsx-\bszeta\|_{\infty}}\right)^s < \infty.
\end{eqnarray*}
Hence $f$ is analytic, as claimed.
\end{proof}

\begin{small}
\noindent\textbf{Authors' addresses:}
\\ \\
\noindent Josef Dick,
\\ 
School of Mathematics and Statistics, 
University of New South Wales, Sydney, NSW, 2052, Australia\\
 \\
\noindent Peter Kritzer, Friedrich Pillichshammer,
\\ 
Institut f\"{u}r Finanzmathematik, 
Universit\"{a}t Linz, Altenbergerstr.~69, 4040 Linz, Austria\\
 \\
\noindent Henryk Wo\'{z}niakowski, \\
Department of Computer Science, Columbia University, New York 10027,
USA and Institute of Applied Mathematics, 
University of Warsaw, ul. Banacha 2, 02-097 Warszawa, Poland\\ \\

\noindent \textbf{E-mail:} \\
\texttt{josef.dick@unsw.edu.au}\\
\texttt{peter.kritzer@jku.at}\\
\texttt{friedrich.pillichshammer@jku.at} \\
\texttt{henryk@cs.columbia.edu}
\end{small}
\end{document}